\newtheorem{thm}{Theorem}
\newtheorem{lem}[thm]{Lemma}
\newtheorem{prop}[thm]{Proposition}
\newtheorem{claim}[thm]{Claim}
\newtheorem{definition}[thm]{Definition}
\numberwithin{thm}{section}
\newcommand{\Bi}{\mathrm{Bi}}
\newcommand{\Po}{\mathrm{Po}}
\newcommand{\pr}{\mathbb{P}}
\renewcommand{\Pr}{\mathbb{P}}
\newcommand{\cH}{\mathcal{H}}
\newcommand{\cE}{\mathcal{E}}
\newcommand{\cC}{\mathcal{C}}
\newcommand{\cQ}{\mathcal{Q}}
\newcommand{\cA}{\mathcal{A}}
\newcommand{\cR}{\mathcal{R}}
\newcommand{\cD}{\mathcal{D}}
\newcommand{\cB}{\mathcal{B}}
\newcommand{\ii}{\mathbf{i}}
\newcommand{\UU}{\mathbf{U}}
\newcommand{\NN}{\mathbb{N}}
\newcommand{\cond}{\; \middle\vert \;}
\newcommand{\eps}{\varepsilon}
\newcommand{\Erdos}{Erd\H{o}s}
\newcommand{\Renyi}{R\'enyi}
\newcommand{\sizevl}{ n-n^{1-\delta}}
\newcommand{\error}{\eps^*}
\newcommand{\pc}{M}
\begin{document}

\title{The sharp threshold for jigsaw percolation in random graphs}

\author[Oliver Cooley, Tobias Kapetanopoulos, Tam\'as Makai] {Oliver Cooley$^{*}$, Tobias Kapetanopoulos$^{**}$, Tam\'as Makai$^{***}$}

\thanks{$^{*}$Supported by Austrian Science Fund (FWF) \& German Research Foundation (DFG): I3747.\\
		\indent $^{**}$Supported by Stiftung Polytechnische Gesellschaft PhD grant.\\
		\indent $^{***}$Supported by Austrian Science Fund (FWF): P26826 and EPSRC: EP/N004221/1.}

\address{Oliver Cooley, {\tt cooley@math.tugraz.at}, Graz University of Technology, Institute of Discrete Mathematics, Steyrergasse 30, 8010 Graz, Austria.}
\address{Tobias Kapetanopoulos, {\tt kapetano@math.uni-frankfurt.de}, Goethe University, Mathematics Institute, 10 Robert Mayer St, Frankfurt 60325, Germany.}
\address{Tam\'as Makai, {\tt t.makai@qmul.ac.uk}, School of Mathematical Sciences, Queen Mary University of London, Mile End Road, London E1 4NS.}

\begin{abstract}
We analyse the jigsaw percolation process, which may be seen as a measure of whether two graphs on the same vertex set are
``jointly connected''. Bollob\'as, Riordan, Slivken and Smith proved that when the two graphs are independent binomial random graphs,
whether the jigsaw process percolates undergoes a phase transition when the product of the two probabilities
is $\Theta\left( \frac{1}{n\ln n} \right)$. We show that this threshold is sharp, and that it lies at $\frac{1}{4n\ln n}$.\\
\\
\begin{tabular}{l}
\emph{Keywords:} jigsaw percolation; random graph; phase transition; sharp threshold.\\
\end{tabular}\\
\begin{tabular}{rl}
2010 Mathematics Subject Classification: & Primary 05C80\\
& Secondary 60K35, 60C05.
\end{tabular}
\end{abstract}

\maketitle

\section{Introduction}

\subsection{Motivation and history}

The jigsaw percolation process 
was introduced by Brummitt, Chatterjee, Dey and
Sivakoff~\cite{BrummittChatterjeeDeySivakoff15}
as a model for how a group of people might collectively solve a problem which
would be insurmountable individually. The premise is that each person has a piece
of a puzzle (or some knowledge, idea or expertise), and the pieces must be combined in
a certain way to solve the puzzle.

This is modelled using two graphs on a common vertex set: a red \emph{people graph},
with an edge if the two people know or collaborate with each other; and a blue
\emph{puzzle graph} if the two corresponding pieces of the puzzle can be combined.
If a pair of vertices are connected by both a red and a blue edge, the two
corresponding people share their information -- modelled in the graphs by
merging the two vertices into one cluster. 
Subsequently, two clusters are merged if a red and a blue edge runs between them.
Thus once parts of the puzzle have already been assembled, they become easier to merge.
The process continues until no additional
merges are possible. If it ends with one single cluster this indicates that the
puzzle has been solved, in which case we say that the process \emph{percolates}.
The process is formally defined in Section~\ref{sec:jigsaw}.

This process was first studied by Brummitt, Chatterjee, Dey and Sivakoff~\cite{BrummittChatterjeeDeySivakoff15},
and subsequently by Gravner and Sivakoff~\cite{GravnerSivakoff17}. They considered various deterministic
possibilities for the blue graph and random possibilities for the red graph and determined
some necessary and some sufficient conditions for the process to percolate
\emph{with high probability}, often abbreviated to \emph{whp},
meaning with probability tending to $1$ as the number of vertices $n$ tends to infinity.

Bollob\'as, Riordan, Slivken and Smith~\cite{BollobasRiordanSlivkenSmith17} then considered the case when both red and blue
graphs are random. Given a natural number $n$ and a real number
$p\in [0,1]$, the Erd\H{o}s-R\'enyi binomial random graph $G(n,p)$ is a graph on vertex set
$[n]:=\{1,2,\ldots,n\}$ in which each pair of vertices forms an edge with probability
$p$ independently.
Consider the binomial random graphs $G_1=G(n,p_1)$ and independently $G_2=G(n,p_2)$ on the same vertex set.
The random double graph created in this way
is denoted by $G(n,p_1,p_2)$. For brevity, we refer to the \emph{jigsaw process}
rather than the jigsaw percolation process.

\begin{thm}[Bollob\'as, Riordan, Slivken, Smith~\cite{BollobasRiordanSlivkenSmith17}]\label{thm:BRSS}
There exists a constant $c$ such that the following holds.
\begin{enumerate}
\item \label{thm:brss:subcrit} If $p_1p_2 < \frac{1}{cn\ln n}$, then with high probability the jigsaw process on
$G(n,p_1,p_2)$ does not percolate.
\item \label{thm:brss:supercrit} If $p_1p_2 > \frac{c}{n\ln n}$ and $p_1,p_2\ge \frac{c\ln n}{n}$, then with high probability the jigsaw process on $G(n,p_1,p_2)$ percolates.
\end{enumerate}
\end{thm}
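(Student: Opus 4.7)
The plan is to handle the two parts separately: part (i) via a first-moment argument exhibiting a local obstruction to percolation, and part (ii) by first building a large cluster and then absorbing every remaining vertex.

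For part (i), I would search for vertices $v$ that are \emph{locally isolated} in a strong enough sense to remain singletons throughout the entire closure. The first candidate is a vertex $v$ with no \emph{double-neighbour}, i.e.\ no $u$ such that $uv$ is both red and blue; however, this alone does not prevent $v$ from being absorbed into a cluster $C$ that contains one red and one blue neighbour of $v$. I would therefore strengthen the condition: ask that $v$ has atypically few red and blue neighbours and that no double edge leaves a small graph-theoretic ball around $v$, so that no growing cluster can reach $v$. A first-moment computation, combined with concentration for the local neighbourhood counts, should show that for $c$ large enough the expected number of such locally isolated vertices tends to infinity; a corresponding variance bound, using quasi-independence of the local configurations at well-separated vertices, then produces at least one such vertex whp, blocking percolation.

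For part (ii), assume $p_1 p_2 \geq c/(n\ln n)$ and $p_1, p_2 \geq c\ln n/n$. The strategy is to grow a cluster in stages. The double graph $G_1\cap G_2$ is distributed as $G(n, p_1 p_2)$ with $np_1 p_2 \geq c/\ln n$, so it contains a matching of size $\Theta(n/\ln n)$ that serves as initial seeds. These seeds can be iteratively merged by the sparse extra red and blue edges, and via a branching-process style analysis one should obtain a cluster of size $\alpha n$ for some constant $\alpha>0$. Once the cluster $C$ has constant density, every remaining vertex $v$ has expected red neighbours in $C$ of order $c\ln n$ and similarly for blue, so the probability that $v$ fails to have both a red and a blue neighbour in $C$ is $n^{-\Omega(c)}$; a union bound then absorbs all remaining vertices in a single round, completing percolation.

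The principal obstacle is the cluster-growth step of part (ii): the dynamics are highly non-Markovian, since every absorption depends on the entire history, and naive exposure of all edges at once introduces complicated dependencies between the cluster $C$ and the edges incident to an outside vertex $v$. The standard way to circumvent this is \emph{sprinkling}: split each $G_i=G(n,p_i)$ as a union $G_i'\cup G_i''$ of two independent subgraphs at appropriate parameters, use $G_1'\cap G_2'$ and $G_i'$ to construct a cluster of constant density via a controlled branching-process argument, and use the independent $G_i''$ to carry out the final absorption without conditioning issues. An analogous subtlety appears in (i), where the locally isolated condition on $v$ must survive the entire closure process rather than merely the first round; this is handled by imposing the isolation on a ball whose radius is chosen large enough that no chain of merges from outside can penetrate it.
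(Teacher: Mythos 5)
Your plan for part (ii) is in the right spirit and quite close to how both the BRSS paper and the present paper argue: grow a cluster past a bottleneck via a branching-process style absorption, use sprinkling to preserve independence, and finish with a union bound over vertices outside a once-large cluster. (The paper grows from a single seed vertex rather than a matching of double edges, but this is a cosmetic difference; the essential branching-process + sprinkling structure is the same.)

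Your plan for part (i), however, has a genuine gap and in fact starts from the wrong picture. You are trying to exhibit a \emph{local} obstruction -- a vertex $v$ that is certified to remain a singleton by a condition on a bounded-radius ball around it. That is the right picture for connectedness of a single random graph, where the critical obstruction really is an isolated vertex. But for jigsaw percolation it fails for a structural reason: clusters merge as soon as there is \emph{one} red edge and \emph{one} blue edge between them, not a double edge. So a cluster that begins arbitrarily far from $v$ and contains no double edge anywhere near $v$ can still absorb $v$ the moment it happens to contain one red neighbour and one blue neighbour of $v$. Requiring ``no double edge leaves a small ball around $v$'' therefore does not stop a growing cluster from reaching $v$, and ``$v$ has atypically few red and blue neighbours'' only slows absorption; it cannot prevent it unless $v$ has literally zero red (or zero blue) neighbours, which whp never happens when $p_1,p_2 \ge \ln n/n$. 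Whether $v$ remains a singleton depends on the entire trajectory of the closure, which is not determined by any fixed-radius ball. Indeed, the present paper points this out explicitly: the critical obstruction for jigsaw percolation has size $\Theta(\ln n)$, and the subcritical proof (here and in BRSS) proceeds by a first-moment bound not on local obstructions but on the number of \emph{percolating configurations} of size around $2\ln n$ -- one shows that whp no percolating set of that size exists, so the process stalls before the bottleneck. Your first-moment idea is pointed at the wrong object; redirect it at percolating sets/configurations rather than at isolated-looking vertices, and you recover the actual strategy.
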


In other words, percolation of the jigsaw process undergoes a phase transition
when the product $p_1p_2$ has order $\frac{1}{n\ln n}$. 
Note that connectedness
of each graph is a necessary condition for percolation, which is the reason
for the additional assumption in the supercritical case (Statement~\ref{thm:main:supercrit}): both $p_1$ and $p_2$
must be larger than $\frac{\ln n}{n}$, which is the threshold for connectedness, as first
proved by \Erdos\ and \Renyi~\cite{ErdosRenyi59}.

Indeed Buldyrev, Parshani, Paul, Stanley and Havlin \cite{BPPSH10} considered a related process, in which a set of vertices percolates if the graph spanned by these vertices is connected both in the red and the blue graph. 

Theorem~\ref{thm:BRSS} has subsequently been extended in various directions. Bollob\'as, Cooley, Kang
and Koch~\cite{BCKK17} proved a generalisation to $k$-uniform hypergraphs
and a jigsaw percolation process on the $j$-sets for each $1\le j \le k-1$.
In another direction, Cooley and Guti\'errez~\cite{CG17} proved an analogous result
for a set of $r$ graphs on a common vertex set, where $2\le r =o(\sqrt{\ln \ln n})$.

\subsection{Main theorem}
All of the results previously described (except for two special cases in~\cite{GravnerSivakoff17}) determine their thresholds only up to
a multiplicative constant.
In this paper we strengthen the result of Bollob\'as,
Riordan, Slivken and Smith~\cite{BollobasRiordanSlivkenSmith17} by determining the precise location of the threshold.

\begin{thm}\label{thm:main}
Let $\eps>0$ be any constant.
\begin{enumerate}
\item \label{thm:main:subcrit} If $p_1p_2\le \frac{1-\eps}{4n\ln n}$,
then with high probability the jigsaw process on $G(n,p_1,p_2)$ does not percolate.
\item \label{thm:main:supercrit} If $p_1p_2\ge \frac{1+\eps}{4n\ln n}$ and $p_1,p_2 \geq \frac{\ln n}{n}$,
then conditioned on $G_1,G_2$ being connected, with high probability the jigsaw process on $G(n,p_1,p_2)$ percolates.
\end{enumerate}
\end{thm}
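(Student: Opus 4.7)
\emph{Subcritical direction (Part \ref{thm:main:subcrit}).}
The strategy is a first-moment / second-moment method applied to an \emph{unabsorbed} vertex $v$: a vertex such that, at the end of the jigsaw process restricted to $V\setminus\{v\}$, no cluster simultaneously contains a $G_1$-neighbour and a $G_2$-neighbour of $v$. If any unabsorbed vertex exists, the singleton cluster $\{v\}$ can never merge with any other cluster in the full process on $V$, and percolation fails. I would first analyse the expected number of unabsorbed vertices. Conditioning on the neighbourhoods $N_1(v),N_2(v)$, the probability of non-absorption factors (up to a controlled error) through the cluster structure of the jigsaw process on $V\setminus\{v\}$, and one argues that throughout the subcritical regime this probability is at least $n^{-(1-\eps)+o(1)}$, so that the expected count of unabsorbed vertices is at least $n^{\eps+o(1)}\to\infty$. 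A second-moment (or Janson-type) calculation, leveraging the near-independence of the events for pairs $v,v'$ that are far apart in $G_1\cup G_2$, then shows that such a vertex exists whp, ruling out percolation.

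\emph{Supercritical direction (Part \ref{thm:main:supercrit}).}
I would run the jigsaw process in two phases. In Phase~1, a direct analysis of the cluster-growth process (via a branching-process approximation or a dynamical exploration, rather than a direct appeal to Theorem~\ref{thm:BRSS}\ref{thm:brss:supercrit} whose constant is too weak) shows that, under the assumption $p_1p_2\ge\frac{1+\eps}{4n\ln n}$ and $p_1,p_2\ge\ln n/n$, a dominant cluster $L$ of size at least $\sizevl$ forms whp, for some constant $\delta>0$. In Phase~2, each remaining vertex $v\notin L$ is absorbed into $L$: since $|L|\ge\sizevl$ and $p_1,p_2\ge\ln n/n$, the probability that $L$ fails to contain both a $G_1$-neighbour and a $G_2$-neighbour of $v$ is at most $(1-p_1)^{|L|}+(1-p_2)^{|L|}\le 2n^{-(1-o(1))}$, and a suitably refined union bound over the at most $n^{1-\delta}$ vertices outside $L$ closes the argument. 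The sharp constant $\tfrac14$ enters through the precise threshold at which Phase~1 produces the dominant cluster.

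\emph{Main obstacle.}
The heart of the argument lies in matching the constant $\tfrac14$ on both sides. On the subcritical side, one must estimate the obstruction probability sharply, even after accounting for the recursive dependency between the cluster structure on $V\setminus\{v\}$ and the global random graphs $G_1,G_2$. On the supercritical side, Phase~1 must be carried out with the sharp constant, which I expect to require a two-moment analysis of the early cluster-growth that tracks not only the first-order branching rate but also the fluctuations determining when a cluster of size $\Theta(\ln n)$ ignites a cascade to size $\sizevl$. The central technical device I would use is a carefully chosen two-round exposure: first reveal enough of $G_1,G_2$ to determine the skeleton of the process (the cluster structure on $V\setminus\{v\}$ in the subcritical case, or the dominant cluster $L$ in the supercritical case), and then use the remaining independent randomness on the edges at $v$ to estimate absorption (or non-absorption) probabilities. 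Ensuring that the asymptotics of both moments line up at the same constant $\tfrac14$ is the crux of the proof.
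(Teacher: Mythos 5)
Your supercritical sketch (dynamical exploration with branching-process control to grow a dominant cluster, then a second round of exposure to absorb stragglers) is in the right spirit and broadly matches the paper's three-stage construction, but your subcritical plan rests on a misconception about the nature of the threshold, and the moment calculation you propose is circular.

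\textbf{The subcritical obstruction is not a vertex.} You propose to certify non-percolation by exhibiting an \emph{unabsorbed vertex} $v$: one such that no cluster of the jigsaw process on $V\setminus\{v\}$ contains both a $G_1$-neighbour and a $G_2$-neighbour of $v$. That event is indeed sufficient for non-percolation, but it is the wrong object to control. For connectivity of a single random graph, the critical obstruction \emph{is} local (isolated vertices), which is why that strategy establishes the sharp $\frac{\ln n}{n}$ threshold. For jigsaw percolation the critical obstructions are of size $\Theta(\ln n)$: the process has a bottleneck at clusters of size around $2\ln n$, and the constant $\tfrac14$ is exactly the point at which clusters of that size start to appear. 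Your proposal never touches this scale; the constant $\tfrac14$ has no natural place to enter your first-moment estimate.

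\textbf{The first moment is circular.} The probability that $v$ is unabsorbed is governed by $\exp\bigl(-p_1p_2\sum_C |C|^2\bigr)$, where the sum runs over the final clusters of the jigsaw process on $V\setminus\{v\}$. That sum is precisely the unknown: if the restricted process percolates, $\sum_C |C|^2 \asymp n^2$ and $v$ is absorbed with probability $1-o(1)$; if the restricted process stays fragmented, $\sum_C |C|^2 = O\bigl(n\ln n\bigr)$, and essentially \emph{every} vertex is unabsorbed (so your claimed estimate $n^{-(1-\eps)+o(1)}$ for the non-absorption probability is in any case off -- it is close to $1$ in the regime you actually need). In other words, estimating the non-absorption probability already requires knowing that the process is subcritical, which is the statement to be proved. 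You therefore need an argument that certifies non-percolation \emph{without} conditioning on the final cluster structure. What the paper does instead is count, via an absorption process and a careful enumeration of minimal percolating configurations, the expected number of ways a cluster could ever reach size $2\ln n$, and shows that this expectation tends to $0$ at the stated constant. That is a self-contained first moment, not one that presupposes the conclusion.

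\textbf{On the supercritical side}, your sketch is closer to the mark: the two-round exposure of the red graph, and the idea that the hard part is the precise probability of passing the bottleneck at size $2\ln n$ with the exact constant, both appear in the paper. What is missing from your sketch is how to actually obtain the sharp constant in Phase~1: the paper does this by running many independent rounds of a construction algorithm, showing each round reaches the bottleneck with probability at least $n^{-1+2\delta}$ and then escapes with constant probability, and then arguing that enough rounds are available. Saying \enquote{a two-moment analysis of the early cluster-growth} points in roughly the right direction but does not surface the decisive combinatorial ingredient, namely the careful lower bound on the probability that the sum of near-critical Poisson increments stays at or above the diagonal for $2\ln n$ steps.
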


Let us observe that the jigsaw process has a natural generalisation
to any number of graphs on a common vertex set (see~\cite{CG17}), and in particular the analogous
process on just one graph would percolate if and only if the graph is
connected. Thus we may view jigsaw percolation as a measure of whether
two graphs on a common vertex set are \emph{jointly connected}. In this way,
Theorem~\ref{thm:main} may be considered a double-graph analogue of the classical
result of Erd\H{o}s and R\'enyi~\cite{ErdosRenyi59} on the threshold for connectedness
of a random graph.

\subsection{Proof strategy}

For random graphs, the famous hitting time result of
Bollob\'as and Thomason~\cite{BollobasThomason85} relates the threshold for connectedness
of a random graph to the disappearance of the last isolated vertex,
implying that the critical obstruction for connectedness of random graphs
is the minimal one, and in particular, whether a random graph is connected
or not is essentially determined by local conditions.
In contrast,
the critical obstructions for jigsaw percolation on at least two
graphs are \emph{not} local ones -- in fact they are of size $\Theta(\ln n)$.
This makes determining the threshold,
and the proofs of both subcritical and supercritical cases, significantly
more complex.

The proof strategies for both the subcritical and supercritical cases of Theorem~\ref{thm:main} are
influenced by the fact that there is a \emph{bottleneck} in the jigsaw process.
More precisely, if any cluster reaches size around $\frac{1}{2np_1p_2}$, 
then it is large enough that whp it will go on to
incorporate all vertices. However, for small $p_1p_2$, no cluster will
reach this size. In fact, the size of the largest cluster
is approximately the smallest positive solution of the implicit equation
$2xN e^{-xN}=n^{-1/x}$, where $N=np_1p_2$,
which reaches $\frac{1}{2np_1p_2}$ when $p_1p_2$ is
$\frac{1}{4n\ln n}$, i.e.\ at the threshold for percolation.
Thus there is a bottleneck in the process at size around $2\ln n$.

Therefore in the subcritical case we will prove that whp no percolating set has size at least
$2\ln n$. On the other hand, in the supercritical case, the main difficulty is to
show that whp some percolating sets reach size slightly larger than $2\ln n$,
after which it is relatively straightforward to show that in fact whp one of these sets will also percolate
with all remaining vertices.

\subsection{The jigsaw process}\label{sec:jigsaw}

We now formally introduce the jigsaw process. A \emph{double graph} is a triple $(V,E_1,E_2)$,
where $V$ is a set of vertices and for $i=1,2$ we have $E_i\subset \binom{V}{2}$.
In other words, $(V,E_1)$ and $(V,E_2)$ are both graphs on a common vertex set.

\vspace{0.2cm}

\begin{algorithm}[H]\label{alg:jigsaw}
\SetAlgoLined
\textbf{Input:} Double graph $(V,E_1,E_2)$.\\
Set $i=0$, $V^{(0)}=V$, $E_1^{(0)}=E_1$ and $E_2^{(0)}=E_2$.\\
Set $H^{(0)}$ to be the auxiliary graph $(V^{(0)},E_1^{(0)}\cap E_2^{(0)})$.\\
\While{$E(H^{(i)})\neq \emptyset$}{
Set $V^{(i+1)}$ to be the set of components of $H^{(i)}$.\\
For $C,D\in V^{(i+1)}$ and $j=1,2$ let $\{C,D\}\in E_j^{(i+1)}$ iff there is at least one edge between $C$ and $D$ in $E_j^{(i)}$.\\
Set $H^{(i+1)}$ to be the auxiliary graph $(V^{(i+1)},E_1^{(i+1)}\cap E_2^{(i+1)})$.\\
Proceed to step $i+1$.
}

\textbf{Output:} $V^{(i)}$.
 \caption{{\sc Jigsaw Process}}
\end{algorithm}

\vspace{0.2cm}

Note that the process always terminates since $|V^{(i)}|$ is always positive, but strictly decreasing with $i$.
We say that the jigsaw process \emph{percolates} if at the end of the process $V^{(i)}$ contains exactly one vertex.

\subsection{Paper Overview}

The proofs of both the subcritical and the supercritical case are based on showing that,
for $k$ in a suitable range,
the probability that there exists a percolating set of size $k+1$ is approximately
\begin{equation*}\label{eq:heuristic}
\left(2kn\right)^{k+1} (p_1p_2)^k e^{-k^2 np_1p_2} e^{o(k)} = \left(2knp_1p_2 e^{-knp_1p_2}\right)^k n e^{o(k)} .
\end{equation*}
Very roughly, $(2kn)^{k+1}$ is the number of configurations on $k+1$ vertices (within a set of $n$ vertices) that can
make these $k$ vertices a percolating set, $(p_1p_2)^k$ is the probability that the relevant edges are present,
while $e^{-k^2np_1p_2}$
is the probability that this percolating set would actually be obtained (with an appropriate algorithm)
without other vertices also being added.

The main difficulty in each of the proofs is rigorously proving that this approximation is valid, as a lower bound for the
supercritical case and as an upper bound for the subcritical case.

Some preliminary results and notation are established in Section~\ref{sec:prel}.
This is followed by the proof of the subcritical regime in Section~\ref{sec:subcrit}
and by the proof of the supercritical regime in Section~\ref{sec:supercrit}.
Finally, in Section~\ref{sec:concluding} we discuss some further results and open problems.

\section{Preliminaries}\label{sec:prel}

We first collect various auxiliary results and definitions that we will need throughout the paper.
First note that since the jigsaw process is symmetric in the two graphs, we may assume without loss
of generality that $p_2\leq p_1\le 1$.
Furthermore, since percolation of the jigsaw process is a monotone property of double graphs,
we may also assume that 
\begin{equation}\label{eq:threshold}
p_1p_2=(1\pm \eps)\frac{1}{4n\ln{n}},
\end{equation} 
i.e.\ we assume for the subcritical case that $p_1p_2=(1- \eps)\frac{1}{4n\ln{n}}$, and for the
supercritical case that $p_1p_2=(1+ \eps)\frac{1}{4n\ln{n}}$.
Furthermore, since connectedness of both graphs is a necessary condition for the double graph
to percolate, in both subcritical and supercritical cases we may assume that
\begin{equation}\label{eq:conn}
p_1 \ge p_2\ge \frac{\ln n -\ln \ln n}{n}.
\end{equation}
This is already true by assumption in the supercritical case. In the subcritical case,
if $p_2$ does not satisfy this condition, then with high probability $G(n,p_2)$ is not
connected by the classical result of \Erdos\ and
\Renyi~\cite{ErdosRenyi59}, and therefore
the conclusion of Theorem~\ref{thm:main}~\ref{thm:main:subcrit} certainly holds.

Note that \eqref{eq:threshold} and \eqref{eq:conn} imply an upper bound on the individual probabilities namely
\begin{equation}\label{eq:upperprob}
p_1,p_2=O\left(\frac{1}{(\ln{n})^2}\right).
\end{equation}

Furthermore, observe that 
\begin{align}\label{pre:p2upper}
p_2 \leq \sqrt{p_1 p_2} = \sqrt{\frac{1+\eps}{4n\ln n}} \leq n^{-1/2}
\end{align}
and 
\begin{align}\label{pre:p1lower}
p_1 \geq \sqrt{p_1 p_2} \ge \frac{\sqrt{1-\eps}}{2\sqrt{n\ln n}}.
\end{align}

We will need to bound various random variables from above and below,
which we do by means of stochastic domination.

\begin{definition}
Let $X,Y$ be two positive integer-valued random variables. We say that $X$ \textit{stochastically dominates} $Y$,
and write $X \succ Y$, if $ \pr [X\geq r] \geq \pr [Y\geq r] $ for all $r\in \NN.$
\end{definition}

We will often use the following form of the Chernoff bound (see e.g.~\cite{JansonLuczakRucinskiBook}).

\begin{lem}\label{lem:chernoff}
For any binomial random variable $X$ we have
$$\pr [X\ge \mathbb{E}[X]+t]\le \exp\left(-\frac{t^2}{2(\mathbb{E}[X]+t/3)}\right)$$
and
$$\pr [X\le \mathbb{E}[X]-t]\le \exp\left(-\frac{t^2}{2\mathbb{E}[X]}\right).$$
\end{lem}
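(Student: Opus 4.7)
The plan is to apply the exponential Markov inequality (the standard Chernoff trick) to the moment generating function of $X$ and then to optimise the tuning parameter.

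Fix $\lambda > 0$. Markov's inequality gives
\begin{equation*}
\Pr[X \geq a] \;=\; \Pr[e^{\lambda X} \geq e^{\lambda a}] \;\leq\; e^{-\lambda a}\, \mathbb{E}[e^{\lambda X}].
\end{equation*}
Writing $X$ as a sum of $n$ independent Bernoulli$(p)$ variables, I would compute $\mathbb{E}[e^{\lambda X}] = (1-p+pe^{\lambda})^n$ and use $1+x\leq e^x$ to obtain
$\mathbb{E}[e^{\lambda X}] \leq \exp\bigl(\mu (e^{\lambda}-1)\bigr)$,
where $\mu := \mathbb{E}[X]$.

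For the upper tail, set $a = \mu + t$ and take the optimising choice $\lambda = \ln(1 + t/\mu)$; substituting back yields the standard refined Chernoff bound
\begin{equation*}
\Pr[X \geq \mu + t] \;\leq\; \exp\bigl(-\mu\, \varphi(t/\mu)\bigr), \qquad \varphi(x) := (1+x)\ln(1+x) - x.
\end{equation*}
The final step is to replace $\varphi$ by the elementary lower bound $\varphi(x) \geq x^2/(2(1+x/3))$, which is valid for all $x \geq 0$ and immediately yields the stated inequality. For the lower tail I would run the symmetric argument with $\lambda < 0$ (i.e.\ apply Markov to $e^{-\lambda X}$), getting $\Pr[X \leq \mu - t] \leq \exp\bigl(-\mu\, \varphi(-t/\mu)\bigr)$ for $0 \leq t \leq \mu$, and then use the simpler bound $\varphi(-x) \geq x^2/2$ on $[0,1]$.

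The only non-routine point is verifying the two analytic inequalities for $\varphi$. Both are handled by noting that the difference of the two sides vanishes to second order at $x = 0$ and then checking the sign of a further derivative on the relevant interval; equivalently, one can use the Taylor expansion of $(1+x)\ln(1+x)$ together with an explicit remainder estimate. Apart from this short calculus exercise, the proof is a textbook application of exponential Markov to a sum of independent Bernoullis, which is presumably why the authors simply quote the bound from Janson--\L uczak--Ruci\'nski.
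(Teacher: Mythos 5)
Your proof is correct and is exactly the standard argument. The paper itself does not prove this lemma at all---it is quoted from Janson--\L uczak--Ruci\'nski (and indeed attributing it to that source is the entire content of the surrounding text)---so there is no ``paper's own proof'' to compare against.

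A couple of small remarks so that the sketch would compile into a complete argument. The exponential-Markov reduction and the optimisation $\lambda=\ln(1+t/\mu)$ are fine, and the identity
\[
-\lambda(\mu+t)+\mu(e^\lambda-1)=-\mu\varphi(t/\mu), \qquad \varphi(x)=(1+x)\ln(1+x)-x,
\]
is correct. For the two analytic inequalities: setting $h(x)=\varphi(x)-\frac{x^2}{2(1+x/3)}$ one finds $h(0)=h'(0)=0$ and
\[
h''(x)=\frac{1}{1+x}-\frac{27}{(3+x)^3},
\]
which is nonnegative for $x\geq 0$ because $(3+x)^3\geq 27(1+x)$ by AM--GM; this is slightly cleaner than Taylor-with-remainder. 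For the lower tail, $g(x)=\varphi(-x)-x^2/2$ on $[0,1)$ has $g(0)=g'(0)=0$ and $g''(x)=\frac{x}{1-x}\geq 0$, and the boundary case $t=\mu$ is trivial since $\Pr[X\leq 0]=(1-p)^n\leq e^{-\mu}$. With those two short computations supplied, the proposal is a complete and correct proof; since the paper merely cites the bound, any valid proof is acceptable and yours is the canonical one.
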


Throughout the paper we will ignore floors and ceilings when this does not
significantly affect the argument.
We will use the following bounds on factorials which hold for every positive integer (see e.g.~\cite{MR0069328}):
\begin{equation}\label{eq:stirling}
\left(\frac{n}{e}\right)^n \le \sqrt{2\pi n}\left(\frac{n}{e}\right)^n \leq n! \leq e \sqrt{ n}\left(\frac{n}{e}\right)^n.
\end{equation}

The following will be a central definition in the paper.

\begin{definition}
A \emph{percolating set} in a double graph $(V,E_1,E_2)$ is a set of vertices $U\subset V$ such that given the two edge sets
$E_i'=E_i \cap \binom{U}{2}$ for $i=1,2$, the jigsaw process on the double graph $(U,E_1',E_2')$ percolates.
\end{definition}

Whenever we talk about a \emph{cluster} of vertices, in particular this is always a percolating set.

\section{Subcritical Case: Proof of Theorem~\ref{thm:main}~\ref{thm:main:subcrit}}\label{sec:subcrit}

\subsection{Outline}
As mentioned previously, to prove Statement~\ref{thm:main:subcrit} of Theorem~\ref{thm:main}, we will show that
whp there is no percolating set of size at least $2\ln n$.
The key idea is to bound the number of configurations which can cause a set
of vertices to percolate.

\begin{definition}
A \emph{minimal percolating configuration}, is a percolating double-graph $(U,E_1,E_2)$,
where $U\subset [n]$ and $E_i\subset E(G_i)$ for $i=1,2$, and 
each $E_i$ forms a spanning tree in $U$.
\end{definition}
In other words, a minimal percolating configuration contains only the edges which are
needed for it to percolate. Note that we do not forbid additional edges in the host double graph $G(n,p_1,p_2)$,
but they are not part of the minimal percolating configuration.

It is easy to see by induction that any percolating set admits a minimal percolating configuration,
since for two clusters to merge it is enough that there is just one red edge and one blue edge between
them.

In order to bound the number of minimal percolating configurations, we will
analyse how the jigsaw process might evolve on them by introducing the
\emph{absorption process} in Section~\ref{sec:absorption}.
In Section~\ref{sec:configbounds}, we will
characterise minimal percolating configurations according to certain
parameters related to their corresponding absorption processes,
and state bounds on the number of possibilities for configurations
based on these parameters (Theorem~\ref{thm:configswithr} and Lemma~\ref{lem:configsnor}).
These bounds will be proved in Section~\ref{sec:configsnor} and~\ref{sec:configswithr}, after some
technical preliminaries have been proved in Section~\ref{sec:subcritprelim}.
Finally, in Section~\ref{sec:pfsubcrit}, we show how these bounds
prove Theorem~\ref{thm:main}~\ref{thm:main:subcrit}.

\subsection{The absorption process}\label{sec:absorption}

We need a variant of the jigsaw process, which we call
an \emph{absorption process}. In this process we gradually construct a percolating set $S_i=\{v_1,\ldots, v_{t(i)}\}$. 

\vspace{0.2cm}
\begin{algorithm}[H]
\SetAlgoLined
\textbf{Input:} Double graph $(V,E_1,E_2)$, vertex $v_1\in V$ and a set of clusters $\cC$ partitioning $V\setminus \{v_1\}$.\\
Set $i=1$, $t(i)=1$, $\cC_1=\cC$ and $S_1=\{v_1\}$.\\
\While{$t(i)\ge i$}{
Set $\cC'_i\subset \cC_i$ be the set of clusters of size at most $t(i)$ which are adjacent to $v_i$ in one colour and to some vertex from $\{v_1,\ldots,v_i\}$ in the other colour. \\
Set $S_{i+1}=S_i\cup \left(\bigcup_{C\in \cC_i'}C\right)$.\\
Set $t(i+1)=|S_{i+1}|$.\\
Set $v_{t(i)+1},\ldots,v_{t(i+1)}$ to be the vertices of $S_{i+1}\setminus S_i$ in any order.\\
Set $\cC_{i+1}=\cC_{i}\setminus \cC_{i}'$.\\
Proceed to step $i+1$.
}

\textbf{Output:} $S_i$.
 \caption{{\sc Absorption Process}}
\end{algorithm}
\vspace{0.2cm}

If at the end of this algorithm we have $S_i=V$, we say that the absorption
process \emph{percolates}.
If the double graph $(V,E_1,E_2)$ is clear from the context, then we sometimes abuse terminology
slightly by referring to $(v_1,\cC)$ as the input of the algorithm.

\begin{lem}\label{lem:canconstruct}
For every percolating double-graph $(V,E_1,E_2)$ there exists a vertex $v_1$ and a set
of disjoint clusters $\cC$ such that the absorption process with input $(v_1,\cC)$ percolates.
\end{lem}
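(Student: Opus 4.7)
The proof is by induction on $|V|$; the base case $|V|=1$ is immediate, taking $v_1$ to be the unique vertex and $\cC=\emptyset$, so the algorithm halts with $S_1=V$ after step $1$.

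For the inductive step with $|V|\ge 2$, since the jigsaw process on $(V,E_1,E_2)$ percolates, the auxiliary graph $H^{(0)}=(V,E_1\cap E_2)$ must contain at least one edge; I would choose $v_1$ to lie in a component of $H^{(0)}$ of size at least two. The plan is to construct $\cC$ together with the processing order $v_1,v_2,\ldots$ by running a simulation that mirrors the absorption algorithm while interleaving auxiliary merges when needed. Maintain a growing set $S$ (initialised to $\{v_1\}$) with a linear order on its elements, a dynamic partition $\cQ$ of $V\setminus S$ (initialised to the singletons), and a record $\cC$ (initialised empty). At each step, first attempt a \emph{virtual absorption}: find $C\in\cQ$ with $|C|\le|S|$ that is adjacent to the current $v_i$ in one colour and to $\{v_1,\ldots,v_i\}$ in the other, move $C$ into $S$, append its vertices to the processing order, and record $C$ in $\cC$. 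If no such $C$ exists, perform a \emph{virtual merge} inside $\cQ$: replace two clusters having both a red and a blue edge between them by their union. When the simulation terminates with $S=V$, the recorded $\cC$ partitions $V\setminus\{v_1\}$, and by construction each recorded cluster satisfies the absorption conditions at its step; so if the orderings $v_{t(i)+1},\ldots,v_{t(i+1)}$ in the actual algorithm are chosen to match those in the simulation, the actual absorption process reproduces precisely these absorptions and therefore percolates.

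The main obstacle is proving that this simulation always terminates with $S=V$. I would establish a correspondence between the simulation state and the partitions $\mathcal{P}_0,\mathcal{P}_1,\ldots,\mathcal{P}_r$ produced by the underlying jigsaw process on $(V,E_1,E_2)$: at every moment $\{S\}\cup\cQ$ refines some $\mathcal{P}_k$, so any merge available to the jigsaw at round $k+1$ can be realised either as a virtual absorption (if it involves $S$) or as a virtual merge (if it is between two elements of $\cQ$). Since the jigsaw percolates, the simulation cannot stall while $S\ne V$. The subtlest aspect is enforcing the size bound $|C|\le|S|$ on virtual absorptions: if a large cluster becomes double-colour adjacent to $S$ before $|S|$ has grown sufficiently, one must first perform virtual merges that combine pieces of $\cQ$ so as to expose a small ``bridge'' sub-cluster containing the interface vertices between that cluster and $S$, after which the remaining pieces can be absorbed sequentially along the internal $H^{(0)}$-structure. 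This may require strengthening the inductive hypothesis to constrain the choice of starting vertex $v_1$, ensuring that the structure of $H^{(0)}$ affords enough room for the absorbed-so-far set $S$ to accommodate each subsequent absorption.
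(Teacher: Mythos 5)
There is a genuine gap here. You correctly identify the crux of the lemma---that a cluster $C$ may only be absorbed once $|C|\le|S|$---but the proposed resolution is not a proof. The simulation-plus-virtual-merge scheme can stall: after some absorptions, every cluster of $\cQ$ that is double-colour adjacent to $S$ may already be larger than $S$, and if $\cQ$ is already fully merged there are no further virtual merges to perform, so neither branch of your case analysis applies. Your proposed fix---performing virtual merges ``so as to expose a small bridge sub-cluster containing the interface vertices''---is not something the merge operation can do: virtual merges only coarsen the partition $\cQ$, they never split a cluster into pieces, so a too-large cluster cannot be broken into an absorbable bridge plus a remainder. You flag this yourself (``may require strengthening the inductive hypothesis to constrain the choice of starting vertex''), but no such strengthening is supplied, and it is not clear what it would be; constraining $v_1$ to lie in a nontrivial $H^{(0)}$-component controls only the very first step, whereas the size obstruction can arise at any later moment.

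The paper avoids this entirely with a cleaner divide-and-conquer. It looks at the \emph{final} round of the jigsaw process, where some connected auxiliary graph $H^{(i)}$ collapses to a point, picks a non-cutvertex of $H^{(i)}$, and thereby splits $V$ into two non-empty percolating sets $X$ and $Y$. Induction supplies absorption inputs for $X$ and $Y$ separately; one then fixes a red edge $x_1y_1$ and a blue edge $x_2y_2$ crossing the cut, runs both inductive absorption processes just far enough that each side contains its two bridge endpoints, and absorbs the smaller side's partially built set as a \emph{single} cluster into the larger side at precisely the step when its last bridge endpoint is reached---at which point the size condition holds automatically because the larger side is at least as big. The remaining clusters of the smaller side's input are then inherited unchanged. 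This sidesteps the need for any simulation or any claim that local size constraints can always be met greedily; the size constraint is satisfied by construction, thanks to the two-way split and the ``run both, absorb the smaller one whole'' idea, which is the ingredient your approach is missing.
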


\begin{proof}
We prove this statement by induction on the size of $V$. Clearly if $|V|=1$ the statement holds, so
assume that it holds for every set of size at most $k$ and that $|V|=k+1$. 

Since $(V,E_1,E_2)$ percolates, in the final step of the jigsaw process,
a connected auxiliary graph $H^{(i)}$ was merged into one cluster. Consider a vertex of 
$H^{(i)}$ which is not a cutvertex. This vertex corresponds to a
set $X$ of vertices in $V$, and let $Y:=V\setminus X$. Then we have partitioned
$V$ into two non-empty percolating sets.

Fix edges $e_1=x_1y_1\in E_1$ and $e_2=x_2y_2\in E_2$ such that $x_1,x_2\in X$
and $y_1,y_2\in Y$. Note that since $(V,E_1,E_2)$ percolates, such edges must exist.

By the induction
hypothesis, there exists a vertex $v_X\in X$ and a set of clusters $\cC_X$ within $X$
such that the absorption process on $X$ with input $(v_X,\cC_X)$ percolates. Let us denote by $X_{i}$
the percolating set constructed within $X$ after $i$ steps of this absorption process.
Let $i'$ be the first step in which $X_{i'}$ contains both $x_1$ and $x_2$.
We define $v_Y, \cC_Y, j'$ and $Y_{j'}$ analogously within $Y$.

Without loss of generality we have $|X_{i'}|\ge |Y_{j'}|$. Recall that the vertices of $X_{i'}$ are
ordered in the absorption process, and let $i''$ be the index of the later of $x_1,x_2$, wlog $x_2$.
Then when we reach $x_2$ at step $i''$, we have $|X_{i''}|\ge |X_{i'}|\ge |Y_{j'}|$, and therefore at
step $i''$, the cluster $Y_{j'}$ can be merged with $X_{i''}$. Thus the absorption process on $V$
with input vertex $v_X$ and with input of clusters
$\cC_X, Y_{j'}$ and $\{C\in \cC_Y \mid C\cap Y_{j'}= \emptyset\}$ percolates.
\end{proof}

Lemma~\ref{lem:canconstruct} tells us that
any percolating set can be discovered via an absorption process with
some input of starting vertex and clusters.
Note that this is slightly non-constructive, since 
some percolating clusters are already in the input of the algorithm.

\subsection{Bounding configurations}\label{sec:configbounds}
Our aim is to bound the number of possible minimal percolating configurations
by analysing how the absorption process evolves on them.
For this analysis, we will need to define the order in which clusters are
added, which is primarily determined by the step in which a cluster is added,
but there may be more than one cluster added in a single step. In such a case
we order the clusters added in a single step according to the order of their
smallest vertices (recall that the vertices of $G(n,p_1,p_2)$ are labelled $1,\ldots,n$).

\begin{definition}
Given integers $k,\ell,r$,
let $\pc_{k,\ell,r}$ be the number of possible minimal percolating configurations on
vertex set $[k+r]$ for which there exists some input of starting vertex and clusters
such that the absorption process
with this input percolates in $\ell$ steps,
and which adds a cluster of size exactly $r$ in the $\ell$-th step (and therefore has
a percolating set of size
at most $k$ after $\ell-1$ steps).
\footnote{Note that there may also be other clusters
added in the $\ell$-th step.}
\end{definition}

The main difficulty in the subcritical case is to prove the following.

\begin{thm} \label{thm:configswithr}
Given integers $1\le r,\ell\le k$,
we have
$$\pc_{k,\ell,r}\le \binom{k+r}{r} (k!)^2 (r!)^2 2^{k+r} e^{r+\ell} \cdot \frac{k\ell r^3 e^{582}}{2}.$$
\end{thm}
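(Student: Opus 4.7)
The plan is to bound $M_{k,\ell,r}$ by over-counting pairs $\bigl((U,E_1,E_2),(v_1,\cC)\bigr)$ consisting of a minimal percolating configuration on $U = [k+r]$ together with an absorption-process input on it which uses exactly $\ell$ steps and absorbs a size-$r$ cluster at step $\ell$. Each configuration is over-counted by the number of its valid inputs, which only weakens the estimate. As part of the encoding I would also record the identity of the specific size-$r$ cluster $R\in\cC$ absorbed at step $\ell$; selecting its vertex set gives the factor $\binom{k+r}{r}$. The remaining data then splits into two almost-independent pieces: (a)~the internal structure of $R$, which is itself a minimal percolating configuration (since the restriction of $E_i$ to $R$ is connected in each colour and hence a spanning tree of $R$), and (b)~the ``main-side'' data on $U\setminus R$, consisting of a minimal percolating configuration on the remaining $k$ vertices together with an absorption process of length $\ell-1$ on them, plus the two bridging edges that connect $R$ to the already absorbed set at step $\ell$.

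For piece (a), Lemma~\ref{lem:configsnor} furnishes a bound on the number of minimal percolating configurations on a labelled $r$-set together with a valid absorption input, which should contribute the $(r!)^2$ factor together with $2^r$, $e^r$, and a polynomial-in-$r$ prefactor. For piece (b), I would encode the main-side absorption process step by step: for each absorbed cluster one records its hub step, the hub vertex, the two attachment edges, the internal configuration of the cluster, and a colour-swap bit specifying which colour goes to the hub and which to the earlier anchor. Summing the resulting multiplicative contributions and applying the Stirling bounds \eqref{eq:stirling} yields a total of the form $(k!)^2 \cdot 2^k \cdot e^\ell$ together with a $k\ell$ polynomial overhead. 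Finally, the two bridging edges at step $\ell$ between $R$ and the main side contribute a factor of at most $r \cdot k$ for their $R$-endpoints, the choice of the main-side anchor contributes a further factor $O(r)$, and the colour-swap bit a factor of $2$; these consolidate into the $r^3$ inside the prefactor, while all remaining Stirling and approximation slack is collected into the explicit constant $e^{582}/2$.

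The main obstacle I anticipate is two-fold. First, the marked cluster $R$ need not be the \emph{only} cluster absorbed at step $\ell$: the main-side process might formally take $\ell$ steps on $k'<k$ vertices rather than $\ell-1$ steps on $k$ vertices. Handling this while cleanly matching each of the factors $(k!)^2$, $2^k$, $e^\ell$ is delicate, and is presumably where the bookkeeping is most careful; it is likely the reason the prefactor carries a $k\ell$ rather than just $\ell$. Secondly, since we allow the absorption input to be chosen freely, the over-counting must not devour the slack in the $2^{k+r}e^{r+\ell}$ factors; the key point is that the internal sizes of the absorbed clusters sum to at most $k$, so by convexity the per-cluster bounds of the form $(\mathrm{size})!^2 \cdot 2^{\mathrm{size}} e^{\mathrm{size}}$ multiply into $(k!)^2 \cdot 2^k e^k$ rather than something exponentially worse. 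Once these two points are handled, the calculation itself is a relatively mechanical consolidation of combinatorial factors into the stated bound, with the very large constant $e^{582}$ reflecting the deliberately lossy arithmetic at the end.
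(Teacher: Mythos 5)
Your proposal is essentially the same factorisation the paper uses: split off the size-$r$ cluster $R$ added at step $\ell$ (factor $\binom{k+r}{r}$), bound the main side by $\pc'_{k,\ell}$ and the internal structure of $R$ by $\pc'_{r,r}$ via Lemma~\ref{lem:configsnor}, then account separately for the two bridging edges. So the route matches the paper, and the approach is sound.

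A couple of your bookkeeping details drift, though: the two bridging-edge endpoints inside $R$ contribute $r^2$ (not $r\cdot k$), the earlier anchor on the main side ranges over at most $\ell$ vertices (not $O(r)$), and with the colour-swap bit this gives $2r^2\ell$ for the bridging, while Lemma~\ref{lem:configsnor} contributes a polynomial overhead of $k$ (not $k\ell$) from the main side and $r$ from the $R$ side. Multiplying $\tfrac{k}{2}\cdot\tfrac{r}{2}\cdot 2r^2\ell\cdot e^{582}$ is exactly what yields $\tfrac{k\ell r^3 e^{582}}{2}$. Also, the ``obstacle'' you flag about extra clusters being absorbed at step $\ell$ is already absorbed cleanly by the \emph{at most $\ell$ steps} clause in the definition of $\pc'_{k,\ell}$; the paper needs no special handling for it, and the $k\ell$ prefactor does not come from that case. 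Your instinct to re-encode the main-side process step by step would amount to re-proving Lemma~\ref{lem:configsnor}; the paper simply cites it once it is in hand.
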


In order to prove this, we first approximate a slightly different parameter.

\begin{definition}
Let $\pc'_{k,\ell}$ be the number of possible minimal percolating configurations on
vertex set $[k]$ for which there exists some input of starting vertex and clusters
such that the absorption process
with this input percolates in at most $\ell$ steps.
\end{definition}

Note that there are two crucial differences between this and the definition of $\pc_{k,\ell,r}$:
first, we do not consider the final cluster on $r$ vertices; and second, we do not demand
that the process takes \emph{exactly} $\ell$ steps to percolate.

Let us observe that
\begin{equation}\label{eq:configspartition}
\pc_{k,\ell,r} \le \binom{k+r}{r} \pc'_{k,\ell} 2r^2\ell \pc_{r,r}',
\end{equation}
since a configuration which contributes to $M_{k,\ell,r}$
can be partitioned into a minimal percolating set on $k$ vertices
whose absorption process takes at most $\ell$ steps to percolate
and a minimal percolating set on $r$ vertices, for which $r-1\le r$
is certainly an upper bound on the number of steps required for an absorption process on $r$ vertices
to percolate. There are $\binom{k+r}{r}$ possible ways of partitioning
the vertices and $\pc'_{k,\ell} \pc_{r,r}'$ possible minimal percolating configurations
on the two resulting parts. There must also be an edge of each colour
between the two vertex sets, one of which has the last vertex of
the set of size $k$ as a neighbour, which leaves $2r^2\ell$ choices
as claimed.
Thus in order to prove Theorem~\ref{thm:configswithr}, it is
enough to bound $\pc'_{k,\ell}$.

\begin{lem} \label{lem:configsnor}
Given integers $1 \le \ell \le k$, we have
$$\pc'_{k,\ell}\le (k!)^2 2^{k} e^{\ell} \cdot \frac{ke^{291}}{2} .$$
\end{lem}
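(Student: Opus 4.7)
My proof plan for Lemma~\ref{lem:configsnor} is to encode each minimal percolating configuration counted by $\pc'_{k,\ell}$ as a tuple of combinatorial data sufficient to reconstruct it, and then to bound the number of such tuples. By Lemma~\ref{lem:canconstruct}, each such configuration admits some valid absorption input $(v_1,\cC)$; I fix one such input per configuration. The input, together with the configuration, induces a total ordering $v_1,\ldots,v_k$ of the vertex set (grouped by step, and within each step by cluster, with intra-cluster ties broken by vertex label), a partition into clusters whose sizes sum to $k-1$, and, for each cluster $C$ added at some step $i$, a binary choice of whether the bridging edge incident to $v_i$ is red or blue.

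The encoding I would use records: the root $v_1$; a step-assignment together with the cluster partition compatible with the absorption process; the pair of rooted spanning trees on $[k]$ (with common root $v_1$), each equipped with a linear extension of its vertex poset respecting the step order; and one binary colour-label per cluster. These data determine both the configuration and the absorption process uniquely. Counting, the choice of $v_1$ gives the explicit factor $k$; the binary colour-labels contribute at most $2^k$, since the number of clusters is at most $k$; the pair of rooted spanning trees equipped with linear extensions contributes $(k!)^2$, via the standard construction of building each rooted tree one vertex at a time with each non-root vertex choosing a parent from among earlier vertices; the step-assignment and cluster-size data contribute $e^\ell$, using the constraint that at step $i$ only clusters of size at most $t(i)$ may be absorbed; and polynomial-in-$k$ overheads together with Stirling-type slack are absorbed into the constant $e^{291}$.

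The chief technical obstacle is obtaining the factor $e^\ell$ cleanly. A naive enumeration of compositions of $k-1$ into $\ell$ ordered positive parts would yield $\binom{k-2}{\ell-1}$, which is far larger than $e^\ell$ already. The essential saving must therefore come from the ``slow-growth'' constraint $|C|\le t(i)$ imposed by the absorption process: a cluster admissible at step $i$ cannot be larger than the currently absorbed set, so the total admissible growth per step is geometrically rather than combinatorially controlled, and the corresponding sum-product over compositions telescopes. Making this reduction rigorous, while also combining it correctly with the rooted-tree encoding and the binary cluster-labels without double-counting, is the main technical content of the proof; the large constant $e^{291}$ reflects the need to bound numerous Stirling and polynomial remainders along the way.
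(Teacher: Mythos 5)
Your sketch identifies the right shape of the bound --- roughly $(k!)^2 \cdot 2^k \cdot e^\ell$ up to constants --- and you correctly recognise that the naive composition count $\binom{k-2}{\ell-1}$ must be beaten, and that the saving must come from the constraint that a cluster added at step $i$ has size at most $t(i)$. But there is a genuine gap: you explicitly defer the step that makes the lemma true. The claim that ``the corresponding sum-product over compositions telescopes'' is precisely the content that needs to be proved, and it is by far the hardest part of the argument; stating that it is ``the main technical content'' without carrying it out leaves the proof incomplete.

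For comparison, the paper's decomposition is rather different from yours in detail. Rather than a global pair of rooted spanning trees on $[k]$ with linear extensions, the paper fixes the vector $\mathbf{c}$ of cluster-size multiplicities, partitions the $k-1$ non-root vertices into clusters (a multinomial factor $\frac{(k-1)!}{\prod_j (j!)^{c_j}c_j!}$), applies Cayley's formula independently within each cluster ($j^{2j-4}$ for a cluster of size $j$), and separately counts the two bridging endpoints and colours. The $e^\ell$ then emerges from a careful bound on $\sum_{\mathbf{i}\in I}\prod_s i_s$, which is split as a ratio of a factorial numerator (controlled by a telescoping product, i.e.\ Claim~\ref{claim:factorial}) and a sum over step vectors of reciprocals of rising factorials (Claim~\ref{claim:trickysum}). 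The latter requires Proposition~\ref{prop:largej}, which evaluates $\sum_{i\ge j}\frac{1}{(i+1)\cdots(i+j-1)}$ exactly via a Chu--Vandermonde identity; this yields a summable geometric series in $j$ and hence the constant $e^{291}$ and the separated $e^\ell$ from the $j=1$ term. Nothing like this identity appears in your plan. A second, smaller concern: your encoding by two global rooted spanning trees on $[k]$ together with per-cluster binary colour labels is redundant (the two trees already determine the colour of every edge) and the justification of the $(k!)^2$ factor --- ``building each rooted tree one vertex at a time with each non-root vertex choosing a parent'' --- gives $(k-1)!$ per tree for a fixed ordering, not $k!$, so even the easy part of the count needs to be redone to match the stated bound.
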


We will prove Lemma~\ref{lem:configsnor} in Section~\ref{sec:configsnor}.
Subsequently, in Section~\ref{sec:configswithr} we will use Lemma~\ref{lem:configsnor}
to prove Theorem~\ref{thm:configswithr}. Finally we will show how Theorem~\ref{thm:main}~\ref{thm:main:subcrit}
follows from Theorem~\ref{thm:configswithr} in Section~\ref{sec:pfsubcrit}.
But first in Section~\ref{sec:subcritprelim} we will collect a few auxiliary results
that we will need for the proof of Lemma~\ref{lem:configsnor}.

\subsection{Auxiliary results}\label{sec:subcritprelim}

Our aim in Lemma~\ref{lem:configsnor} is to bound $\pc'_{k,\ell}$,
and we will need the following basic bound on
$\pc'_{j,j}$,
i.e.\ the number of minimal percolating
configurations on $j$ vertices, with no restrictions
on the number of steps they take to percolate in an absorption process
(since certainly $j-1\le j$ is an upper bound on the
number of steps an absorption process on $j$ vertices can take to percolate).
The following result was already proved in \cite{BollobasRiordanSlivkenSmith17}, but
since the proof is easy, we include it here for completeness.

\begin{claim}[\cite{BollobasRiordanSlivkenSmith17}]\label{claim:spantrees}
For any integer $j\ge 1$, we have
$\pc_{j,j}' \le j^{2j-4}$.
\end{claim}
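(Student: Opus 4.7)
The plan is to invoke Cayley's formula directly. By the definition of a minimal percolating configuration on vertex set $[j]$, each of the two edge sets $E_1$ and $E_2$ is a spanning tree on $[j]$. So the total number of minimal percolating configurations on $j$ vertices is at most the number of ordered pairs $(T_1,T_2)$ of labelled spanning trees on $[j]$, regardless of whether the resulting double graph actually percolates.

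By Cayley's formula, the number of labelled trees on $j$ vertices is $j^{j-2}$, and hence the number of ordered pairs of such trees is $(j^{j-2})^2 = j^{2j-4}$. This immediately gives $\pc'_{j,j} \le j^{2j-4}$, which is the desired bound. (For $j=1,2$ one should just check the statement directly or interpret $j^{j-2}$ appropriately; in these small cases the bound holds trivially since there is at most one configuration.)

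There is essentially no obstacle here: the entire content of the claim is the observation that the minimality condition forces each colour class to be a spanning tree, after which Cayley's formula closes the argument. The only mild subtlety is that the bound is an overcount (it does not require the pair of trees to actually percolate together), but since we only need an upper bound this is harmless.
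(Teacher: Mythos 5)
Your argument is exactly the paper's: observe that in a minimal percolating configuration each colour class is a spanning tree, then apply Cayley's formula to bound the number of ordered pairs of spanning trees by $j^{2j-4}$. The approach and the reasoning are the same.
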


\begin{proof}
Recall that in a minimal percolating configuration, the red and the blue edge sets each form a spanning tree.
By Cayley's formula there are $j^{j-2}$ spanning trees on $j$ vertices and the result follows.
\end{proof}

In the proof of Lemma~\ref{lem:configsnor}, we also use the following technical proposition.

\begin{prop}\label{prop:largej}
For every $j\geq 3$, we have
$$
\sum_{i=j}^\infty \frac{1}{(i+1)(i+2)\ldots(i+j-1)}=\frac{1}{(j-2)}\cdot \frac{j!}{(2j-2)!}.
$$
In particular,
$$
\sum_{i=j}^\infty \frac{1}{(i+1)(i+2)\ldots(i+j-1)}\leq e^2\frac{j!}{j-2} \left(\frac{e}{2j}\right)^{2j-2}.
$$
\end{prop}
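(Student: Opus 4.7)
The plan is to establish the exact identity by a telescoping partial-fraction argument, and then deduce the stated upper bound from the elementary Stirling inequality \eqref{eq:stirling} already recorded in the paper.

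For the identity, the key observation is that the summand $\frac{1}{(i+1)(i+2)\cdots(i+j-1)}$ has $j-1$ consecutive factors in the denominator, while the shorter quantity
$$b_i := \frac{1}{(i+1)(i+2)\cdots(i+j-2)}$$
(a product of $j-2$ consecutive factors) satisfies
$$b_i - b_{i+1} \;=\; \frac{(i+j-1)-(i+1)}{(i+1)(i+2)\cdots(i+j-1)} \;=\; \frac{j-2}{(i+1)(i+2)\cdots(i+j-1)}.$$
Hence the summand equals $(b_i-b_{i+1})/(j-2)$. Since $j\ge 3$, we have $b_i=O(i^{-1})\to 0$ as $i\to\infty$, so the series telescopes and I get
$$\sum_{i=j}^\infty \frac{1}{(i+1)(i+2)\cdots(i+j-1)} \;=\; \frac{b_j}{j-2} \;=\; \frac{1}{(j-2)\,(j+1)(j+2)\cdots(2j-2)} \;=\; \frac{1}{j-2}\cdot\frac{j!}{(2j-2)!}.$$

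For the upper bound it suffices to show that
$$\frac{1}{(2j-2)!}\;\leq\; e^2\left(\frac{e}{2j}\right)^{2j-2},$$
or equivalently $(2j-2)!\geq e^{-2}(2j/e)^{2j-2}$. The lower Stirling bound $n!\ge (n/e)^n$ from \eqref{eq:stirling} gives $(2j-2)!\geq ((2j-2)/e)^{2j-2}$, so it is enough to verify that $(1-1/j)^{2j-2}\geq e^{-2}$. Taking logarithms, this reduces to $j/(j-1)\leq e^{1/(j-1)}$, which is the standard inequality $1+u\leq e^u$ applied with $u=1/(j-1)$.

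The only mildly creative step is recognising the telescoping pattern for $b_i$; the rest is routine bookkeeping and a well-known exponential estimate, so I do not expect any real obstacle.
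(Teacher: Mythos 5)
Your proof is correct, and the telescoping argument for the identity is genuinely different from (and cleaner than) what the paper does. The paper reaches the same identity via the Chu--Vandermonde binomial identity: it derives a full partial-fraction expansion
$\frac{1}{(i+1)\cdots(i+j-1)} = \frac{1}{(j-2)!}\sum_{\ell=0}^{j-2}(-1)^\ell\binom{j-2}{\ell}\frac{1}{i+1+\ell}$,
then sums this over $i$, manipulates the alternating binomial sum into a telescoping form via the Pascal recurrence, and finally \emph{reapplies} the same partial-fraction identity (with shifted parameters) to evaluate the surviving boundary term. Your observation that the summand is exactly $(b_i-b_{i+1})/(j-2)$ for $b_i = 1/((i+1)\cdots(i+j-2))$ collapses all of that into a single telescoping step, with the tail vanishing because $j\ge 3$ ensures $b_i\to 0$. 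It is a textbook partial-fractions-telescoping trick, strictly more elementary than invoking Chu--Vandermonde, and in my view preferable. For the second inequality, your derivation and the paper's are essentially the same: both apply the lower Stirling bound $n!\ge (n/e)^n$ from \eqref{eq:stirling} and then absorb the discrepancy between $2j-2$ and $2j$ via $\bigl(1+\tfrac{1}{j-1}\bigr)^{j-1}\le e$; the paper writes this multiplicatively as $\bigl(\tfrac{j}{j-1}\bigr)^{2(j-1)}\le e^2$, you write the reciprocal form $(1-1/j)^{2j-2}\ge e^{-2}$, which is the same estimate.
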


\begin{proof}
The Chu-Vandermonde identity, which can be easily verified combinatorially,
states that for non-negative integers $a,b,c$ we have
$$\binom{a+b}{c}=\sum_{\ell=0}^{c}\binom{a}{\ell}\binom{b}{c-\ell}.$$
In fact this equality also holds if $a$ is negative, where we interpret
$\binom{a}{\ell}$ as $\frac{a(a-1)\ldots(a-\ell+1)}{\ell!}$ (see eg. Corollary 2.2.3 in  \cite{MR1688958}).
Setting $a=-i-1$, $b=i+j-1$, and $c=j-2$ leads to
\begin{align*}
1&=\sum_{\ell=0}^{j-2}(-1)^{\ell}\frac{(i+1)(i+2)\ldots (i+\ell)}{\ell!}\cdot\frac{(i+j-1)!}{(j-2-\ell)! (i+\ell+1)!}\\
&=\sum_{\ell=0}^{j-2}(-1)^{\ell}\frac{(i+\ell)!}{\ell!}\cdot \frac{(i+1)(i+2)\ldots (i+j-1)}{(j-2-\ell)! (i+\ell+1)!}\\
&=\frac{1}{(j-2)!}\sum_{\ell=0}^{j-2}(-1)^{\ell}\frac{(j-2)!}{\ell! (j-2-\ell)!}\cdot \frac{(i+1)(i+2)\ldots (i+j-1)}{i+\ell+1}
\end{align*}
implying
\begin{equation}\label{eq:pfd}
\frac{1}{(i+1)(i+2)\ldots(i+j-1)}=\frac{1}{(j-2)!}\sum_{\ell=0}^{j-2}(-1)^{\ell}\binom{j-2}{\ell}\frac{1}{i+1+\ell}.
\end{equation}

Summing this for $j \le i \le m$ leads to
\begin{align*}
\sum_{i=j}^{m}\frac{1}{(i+1)(i+2)\ldots(i+j-1)}\hspace{-1.5cm} & \\
&=\frac{1}{(j-2)!}\sum_{i=j}^{m}\sum_{\ell=0}^{j-2}(-1)^{\ell}\binom{j-2}{\ell}\frac{1}{i+1+\ell}\\
&=\frac{1}{(j-2)!}\sum_{i=j}^{m}\left(\frac{1}{i+1} + \frac{(-1)^{j-2}}{i+j-1}
+\sum_{\ell=1}^{j-3}(-1)^{\ell}\left(\binom{j-3}{\ell}+\binom{j-3}{\ell-1}\right)\frac{1}{i+1+\ell}\right)\\
&=\frac{1}{(j-2)!}\sum_{i=j}^{m}\sum_{\ell=0}^{j-3}(-1)^{\ell}\binom{j-3}{\ell}\left(\frac{1}{i+1+\ell}-\frac{1}{i+2+\ell}\right)\\
&=\frac{1}{(j-2)!}\left(\sum_{\ell=0}^{j-3}(-1)^{\ell}\binom{j-3}{\ell}\frac{1}{j+1+\ell}-\sum_{\ell=0}^{j-3}(-1)^{\ell}\binom{j-3}{\ell}\frac{1}{m+2+\ell}\right)
\end{align*}
implying
$$\sum_{i=j}^{\infty}\frac{1}{(i+1)(i+2)\ldots(i+j-1)}=\frac{1}{(j-2)!}\sum_{\ell=0}^{j-3}(-1)^{\ell}\binom{j-3}{\ell}\frac{1}{j+1+\ell}.$$

We now apply~\eqref{eq:pfd} again, with $j$ replaced by $j-1$ and $i$ replaced by $j$, to obtain
$$\frac{1}{(j-2)!}\sum_{\ell=0}^{j-3}(-1)^{\ell}\binom{j-3}{\ell}\frac{1}{j+1+\ell}=\frac{1}{j-2}\cdot\frac{1}{(j+1)(j+2)\ldots (2j-2)}=\frac{1}{j-2}\cdot\frac{j!}{(2j-2)!}
$$
as required.

For the second statement, we simply apply \eqref{eq:stirling} to obtain
$$\frac{1}{(2j-2)!}\leq \left(\frac{e}{2j-2}\right)^{2j-2}= \left(\frac{j}{j-1}\right)^{2(j-1)} \left(\frac{e}{2j}\right)^{2j-2} \leq e^2 \left(\frac{e}{2j}\right)^{2j-2}$$
and the result follows.
\end{proof}

\subsection{Proof of Lemma~\ref{lem:configsnor}}\label{sec:configsnor}

We aim to prove Lemma~\ref{lem:configsnor}, i.e.\ that $\pc'_{k,\ell}\le k (k!)^2 2^{k-1} e^{\ell+291} $
for $\ell\le k$.
Recall that when a cluster $C$ is added in step $i$ of the absorption process, the vertex $v_i$ will be joined to
a vertex $w_C$ of $C$ in one colour, and for some $i'\le i$, the vertex $v_{i'}$
will be joined to a vertex $w_C'$ of $C$ in the other colour.

For $1\le j \le k/2$, let $c_j$ be the number of clusters of size $j$
which are added in the process and set $\mathbf{c}=(c_1,\ldots,c_{k/2})$.
Observe that no clusters of size larger than $k/2$ can be added, since clusters can only
be added to a percolating set which is at least as large as the cluster, and we have only $k$ vertices in total.
Note also that the first vertex $v_1$
does not count towards $c_1$ (recall that it is not considered a cluster), and so
\begin{equation}\label{eq:totalclustsize}
\sum_{j=1}^{k/2} jc_j=k-1.
\end{equation}

Initially, we order the clusters $C_1,\ldots,C_d$, where $d:=\sum_{j=1}^{k/2} c_j$ is the
total number of clusters, according to the order of their smallest vertices.
Recall that the absorption process gives us a new order on the clusters
according to the order in which they are added,
so we have a permutation $\sigma$ on $[d]$ such that $C_{\sigma(s)}$ is the
$s$-th cluster to be added.

We further define the following parameters:
\begin{itemize}
\item Let $i_s$ denote the step in which $C_s$
is added.
\item Let $j_s$ denote the size of $C_s$.
\item Let $m_s:=\max(i_s,j_s)$.
\end{itemize}
In particular, this gives a vector $\mathbf{i}=(i_1,\ldots,i_{d})$.
Let $I:=I(\mathbf{c})$ denote the set of
\emph{permissible} vectors, meaning that
\begin{enumerate}[(P-i)]
\item \label{permcond:clustersoon} 
$i_{\sigma(s)}\le 1+\sum_{t=1}^{s-1}j_{\sigma(t)}$ for all $1\le s \le d$, i.e.\ we do not run out of vertices before adding the next cluster.
\item \label{permcond:clustersmall} 
$j_{\sigma(s)}\le 1+\sum_{t=1}^{s-1}j_{\sigma(t)}$, for all $1\le s \le d$, i.e.\ we do not add a cluster larger than the current percolating set.
\item \label{permcond:percolatingsteps} 
$i_s\le \ell$ for all $1\le s \le d$.
\end{enumerate}

Note that Conditions~(P-\ref{permcond:clustersoon}) and~(P-\ref{permcond:clustersmall}) are
implicitly dependent on the vector $\mathbf{i}$ because $\sigma$ is dependent on $\mathbf{i}$.
Note also that while (P-\ref{permcond:clustersmall}) is \emph{necessary} to ensure that we do not add
a cluster larger than the current percolating set, it is not quite \emph{sufficient} as if we add
multiple clusters to a percolating set in a single step, this condition would allow for all but the first
of these clusters to be too large. However, since we are concerned with upper bounds, this is
not a problem.

We may now bound $\pc_{k,\ell}'$ by considering how many choices we have for various parameters
for each fixed $\mathbf{c}=(c_1,\ldots,c_{k/2})$ and $\mathbf{i}=(i_1,\ldots,i_d)$ and summing over all possibilities for $\mathbf{c},\mathbf{i}$.
Given vectors $\mathbf{c}$ and $\mathbf{i}$,:

\begin{itemize}
\item We have $k$ choices for the first vertex $v_1$;
\item There are $\frac{(k-1)!}{\prod_{j=1}^{k/2} (j!)^{c_j} c_j!}$ distinct ways of assigning the remaining vertices to clusters;
\item For each cluster $C_s$ of size $j_s$ which is to be added in step $i_s$ we have at most:
\begin{itemize}
\item $j_s^2$ choices for the two vertices $w_C,w'_C$;
\item $2$ choices for the colour of the edge from $v_{i_s}$ to $w_C$;
\item $\pc_{j_s,j_s}' \le j_s^{2j_s-4}$ possible minimal percolating configurations within $C$
(by Claim~\ref{claim:spantrees});
\item $i_s$ choices for $i_s'\le i_s$.
\end{itemize}
\end{itemize}
Thus we obtain
\begin{equation}\label{eq:contributions}
\pc'_{k,\ell} \le \sum_{\mathbf{c}} \left\{k\cdot \frac{(k-1)!}{\prod_{j=1}^{k/2} (j!)^{c_j} c_j!} \cdot
\left(\prod_{j=1}^{k/2} (2 j^2  j^{2j-4})^{c_j}\right)
\cdot
\sum_{\mathbf{i}\in I} \prod_{s=1}^{d} i_s \right\}.
\end{equation}
We first consider the terms involving $i_s$, which require the most care. We have
\begin{equation}\label{eq:prodis}
\sum_{\mathbf{i}\in I} \prod_{s=1}^{d} i_s
\le \sum_{\mathbf{i}\in I} \prod_{s=1}^{d} m_s
= \sum_{\mathbf{i}\in I} \frac{\prod_{s=1}^{d} m_s(m_s+1)\ldots (m_s+j_s-1)}{\prod_{s=1}^{d}(m_s+1)\ldots (m_s+j_s-1)}.
\end{equation}
Here we use the convention that an empty product is interpreted as $1$.
We bound the numerator in~\eqref{eq:prodis} with the following claim.

\begin{claim}\label{claim:factorial}
$\prod_{s=1}^{d} m_s(m_s+1)\ldots (m_s+j_s-1)\le (k-1)!$
\end{claim}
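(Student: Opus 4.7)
The plan is to reindex the product by the order in which the clusters are absorbed, rather than by the order of their smallest vertices. Since the product $\prod_{s=1}^{d} m_s(m_s+1)\cdots(m_s+j_s-1)$ is invariant under reordering of its $d$ factors, I would set $\tilde m_s := m_{\sigma(s)}$, $\tilde j_s := j_{\sigma(s)}$, $\tilde i_s := i_{\sigma(s)}$, and work exclusively with these quantities. The key auxiliary quantity is the cumulative size
\[
J_s := \sum_{t=1}^{s-1} \tilde j_t,
\]
with $J_1 = 0$, which represents the number of vertices lying in clusters absorbed strictly before the $s$-th absorbed cluster. By \eqref{eq:totalclustsize} one has $J_{d+1} = k-1$.

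The heart of the argument is the observation that the permissibility conditions bound $\tilde m_s$ exactly by $1 + J_s$. Indeed, condition (P-\ref{permcond:clustersoon}) reads $\tilde i_s \le 1 + J_s$, while condition (P-\ref{permcond:clustersmall}) reads $\tilde j_s \le 1 + J_s$; taking the maximum gives $\tilde m_s \le 1 + J_s$. Since the factor $\tilde m_s(\tilde m_s+1)\cdots(\tilde m_s + \tilde j_s - 1)$ is a product of $\tilde j_s$ consecutive integers starting from $\tilde m_s$, and since shifting the starting point upward only increases the product, I would bound each factor by its counterpart in the block $(J_s + 1)(J_s + 2)\cdots(J_s + \tilde j_s) = (J_s+1)(J_s+2)\cdots J_{s+1}$. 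This yields
\[
\prod_{s=1}^{d} \tilde m_s(\tilde m_s+1)\cdots(\tilde m_s + \tilde j_s - 1) \;\le\; \prod_{s=1}^{d} (J_s+1)(J_s+2)\cdots J_{s+1}.
\]

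The right-hand side telescopes: consecutive blocks concatenate into $1\cdot 2\cdots J_{d+1} = (k-1)!$, which is precisely the claimed bound. There is no genuine obstacle here; the only subtlety is recognising that the product in the statement of the claim is symmetric in $s$, so that one may freely pass from the indexing by smallest vertex (needed to define $\sigma$) to the absorption-order indexing in which (P-\ref{permcond:clustersoon}) and (P-\ref{permcond:clustersmall}) become the telescoping-friendly bounds $\tilde i_s, \tilde j_s \le 1 + J_s$.
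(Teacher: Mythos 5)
Your argument is correct and is essentially the same as the paper's: both reindex the product via $\sigma$, use (P-\ref{permcond:clustersoon}) and (P-\ref{permcond:clustersmall}) to bound $m_{\sigma(s)}$ by one more than the cumulative cluster size, shift each block of $j_{\sigma(s)}$ consecutive integers upward, and telescope to $(k-1)!$. The notation $J_s$ is a minor cosmetic difference; the substance is identical.
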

\begin{proof}
The conditions~(P-\ref{permcond:clustersoon}) and~(P-\ref{permcond:clustersmall}) together imply that
$m_{\sigma(s)}\le 1+\sum_{t=1}^{s-1}j_{\sigma(t)}$, and therefore
\begin{align*}
\prod_{s=1}^d \left\{m_s(m_s+1)\ldots (m_s+j_s-1)\right\}
& = \prod_{s=1}^d \left\{m_{\sigma(s)}(m_{\sigma(s)}+1)\ldots (m_{\sigma(s)}+j_{\sigma(s)}-1)\right\}\\
& \le \prod_{s=1}^d \left\{\left(\sum_{t=1}^{s-1}j_{\sigma(t)}+1\right)\left(\sum_{t=1}^{s-1}j_{\sigma(t)}+2\right)\ldots \left(\sum_{t=1}^{s-1}j_{\sigma(t)}+j_{\sigma(s)}\right)\right\}\\
& = \left(\sum_{s=1}^d j_{\sigma(s)}\right)! = (k-1)!
\end{align*}
as claimed.
\end{proof}

We handle the denominator and sum in~\eqref{eq:prodis} with the following claim.

\begin{claim}\label{claim:trickysum}
	$$\sum_{\mathbf{i}\in I} \frac{1}{\prod_{s=1}^{d} (m_s+1)(m_s+2)\ldots (m_s+j_s-1)}
	\le \ell^{c_1}(2 \ln (\ell+1))^{c_2} 
	\prod_{j=3}^{k/2} \left(3 e^2j!\left(\frac{e}{2j}\right)^{2j-2}\right)^{c_j}.
	$$
\end{claim}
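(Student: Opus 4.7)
The plan is to exploit positivity of each summand to decouple the $i_s$, then to estimate each resulting one-variable sum separately according to the cluster size $j_s$.

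First, I would drop conditions (P-\ref{permcond:clustersoon}) and (P-\ref{permcond:clustersmall}), keeping only $1\le i_s\le \ell$ from (P-\ref{permcond:percolatingsteps}). Since all summands are non-negative this only enlarges the sum, and since $m_s=\max(i_s,j_s)$ depends only on the single coordinate $i_s$ (with $j_s$ fixed), the sum factorises:
\begin{equation*}
\sum_{\mathbf{i}\in I} \prod_{s=1}^{d}\frac{1}{(m_s+1)(m_s+2)\ldots(m_s+j_s-1)}
\le \prod_{j=1}^{k/2} f(j,\ell)^{c_j},
\end{equation*}
where $d:=\sum_j c_j$ and
\begin{equation*}
f(j,\ell):=\sum_{i=1}^{\ell}\frac{1}{(\max(i,j)+1)(\max(i,j)+2)\ldots(\max(i,j)+j-1)}.
\end{equation*}
The claim then reduces to showing $f(1,\ell)=\ell$, $f(2,\ell)\le 2\ln(\ell+1)$, and $f(j,\ell)\le 3e^2 j!\left(\frac{e}{2j}\right)^{2j-2}$ for $j\ge 3$.

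The cases $j=1,2$ are elementary. For $j=1$ the denominator product is empty, so every summand equals $1$ and $f(1,\ell)=\ell$. For $j=2$ the terms $i=1,2$ each equal $\tfrac{1}{3}$, and for $i\ge 3$ one has $\max(i,2)=i$; bounding $\sum_{i=3}^{\ell}1/(i+1)$ by $\int_3^{\ell+1}\! dx/x = \ln(\ell+1)-\ln 3$ gives $f(2,\ell)\le \tfrac{2}{3}-\ln 3 + \ln(\ell+1)\le 2\ln(\ell+1)$, since $\tfrac{2}{3}<\ln 3$ and $\ln(\ell+1)>0$ for $\ell\ge 1$.

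For $j\ge 3$ I would invoke Proposition~\ref{prop:largej}. Setting $T_i := 1/[(i+1)(i+2)\ldots(i+j-1)]$ and $S:=\sum_{i=j}^{\infty} T_i$, each of the at most $j$ terms of $f(j,\ell)$ with $i\le j$ equals $T_j$, while the remaining terms (with $j+1\le i\le \ell$) sum to at most $S-T_j$. Hence
\begin{equation*}
f(j,\ell)\le jT_j + (S-T_j) = (j-1)T_j + S \le j\cdot S \le \frac{j}{j-2}\cdot e^2 j! \left(\frac{e}{2j}\right)^{2j-2},
\end{equation*}
using $T_j\le S$ and the proposition. The main obstacle is precisely this last step: the bound $f(j,\ell)\le j\cdot S$ is essentially tight because of the $j$ repeated copies of $T_j$ from the small-$i$ terms, and $j/(j-2)$ attains its maximum value $3$ exactly at $j=3$ — which is what forces, and just barely permits, the constant $3$ in the statement. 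Since $j/(j-2)\le 3$ for all $j\ge 3$, the three estimates combined with the displayed factorisation yield the claim.
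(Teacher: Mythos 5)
Your proposal is correct and follows essentially the same route as the paper: drop conditions (P-\ref{permcond:clustersoon}) and (P-\ref{permcond:clustersmall}) to factorise the sum over $\mathbf{i}$ into a product of one-dimensional sums (one per cluster), bound the $j=1,2$ factors by elementary estimates, and for $j\ge 3$ absorb the $\min(j,\ell)$ equal small-index terms into a factor of $j$ and invoke Proposition~\ref{prop:largej}, using $j/(j-2)\le 3$. The paper packages the last step as the single inequality $\sum_{i=1}^\ell \le \sum_{i=\min(j,\ell)}^{\ell} j\,(\cdot) \le j\sum_{i=j}^\infty(\cdot)$ rather than your $(j-1)T_j+S\le jS$ decomposition, but the content and the resulting constants are identical.
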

\begin{proof}
Let us note that we \emph{first} fixed the assignment of vertices to clusters, which in particular
determines the $j_s$, and \emph{then} chose the vector $\mathbf{i}$, so in particular $j_s$
is not dependent on $i_s$ (although which values of $i_s$ are permissible does depend on the $j_s$).

	Therefore we have
	\begin{align*}
	\sum_{\mathbf{i}\in I} \frac{1}{\prod_{s=1}^{d} (m_s+1)(m_s+2)\ldots (m_s+j_s-1)} \hspace{-2cm} & \\
	& \le \sum_{i_1=1}^\ell\ldots \sum_{i_d=1}^\ell \prod_{s=1}^{d} \frac{1}{ (m_s+1)(m_s+2)\ldots (m_s+j_s-1)}\\
	& \le \sum_{i_1=\min(j_1,\ell)}^\ell\ldots \sum_{i_d=\min(j_d,\ell)}^\ell \prod_{s=1}^{d} \frac{j_s}{ (m_s+1)(m_s+2)\ldots (m_s+j_s-1)}\\
	&\le \prod_{j=1}^{\ell} \left(\sum_{i=j}^\ell\frac{j}{(i+1)\ldots(i+j-1)}\right)^{c_j}\prod_{j=\ell+1}^{ k/2} \left(\frac{j}{(j+1)\ldots(2j-1)}\right)^{c_j}\\
	&\le \prod_{j=1}^{ k/2 } \left(\sum_{i=j}^{\ell+j-1}\frac{j}{(i+1)\ldots(i+j-1)}\right)^{c_j}.	
	\end{align*}
	In the third inequality we have used $m_s \ge i_s,j_s$.
	In the last inequality we have used the fact that $\ell+j-1\ge \ell,j$.
	
	The $j=1$ term in the product is simply
	\begin{equation}\label{eq:j=1}
	\left(\sum_{i=1}^\ell 1\right)^{c_1}= \ell^{c_1}.
	\end{equation}
	We bound the term $j=2$ as follows:
	\begin{equation}\label{eq:j=2}
	\left(\sum_{i=2}^{\ell+1}\frac{2}{i+1}\right)^{c_2} \le \left(\int_{1}^{\ell+1} \frac{2}{x} dx\right)^{c_2}
	= \left( 2 \ln (\ell+1) \right)^{c_2}.
	\end{equation}
	For $j\ge 3$, we apply Proposition~\ref{prop:largej} to obtain
	\begin{equation*}
	\sum_{i=j}^{\infty}\frac{j}{(i+1)\ldots (i+j-1)}
	\le je^2 \frac{j!}{j-2}\left(\frac{e}{2j}\right)^{2j-2}
	\leq 3 e^2 j!\left(\frac{e}{2j}\right)^{2j-2},
	\end{equation*}
	which gives
	\begin{equation}\label{eq:j=3tol}
	\prod_{j=3}^{ k/2} \left(\sum_{i=j}^{\ell+j-1}\frac{j}{(i+1)\ldots (i+j-1)}\right)^{c_j}
	\leq \prod_{j=3}^{ k/2 }\left( 3 e^2 j!\left(\frac{e}{2j}\right)^{2j-2}\right)^{c_j}.
	\end{equation}

	Combining the bounds from~\eqref{eq:j=1}, \eqref{eq:j=2} and~\eqref{eq:j=3tol} proves the claim.
\end{proof}

Substituting the bounds from Claims~\ref{claim:factorial} and~\ref{claim:trickysum} into~\eqref{eq:prodis},
we have
$$
\sum_{\mathbf{i}\in I} \prod_{s=1}^{d} i_s \le (k-1)!\ell^{c_1}(2 \ln (\ell+1))^{c_2} 
\prod_{j=3}^{k/2} \left(3 e^2j!\left(\frac{e}{2j}\right)^{2j-2}\right)^{c_j},
$$
and therefore~\eqref{eq:contributions} gives
\begin{align*}
\pc'_{k,\ell} & \le \sum_{\mathbf{c}} \left\{ \frac{k!}{\prod_{j=1}^{k/2} (j!)^{c_j} c_j!} \cdot
\left(\prod_{j=1}^{k/2} (2 j^2  j^{2j-4})^{c_j}\right)
\cdot
(k-1)!\left( \ell^{c_1}(2\ln (\ell+1))^{c_2}
\prod_{j= 3}^{k/2} \left(3 e^2 j!\left(\frac{e}{2j}\right)^{2j-2}\right)^{c_j}\right)\right\}\\
& = k!(k-1)! \sum_{\mathbf{c}}
\left\{
\frac{(2\ell)^{c_1}}{c_1!} \cdot \frac{(16\ln (\ell+1))^{c_2}}{2^{c_2}c_2!}
\cdot \prod_{j= 3}^{k/2} \frac{\left(2j^{2j-2}\cdot 3e^2 j!\left(\frac{e}{2j}\right)^{2j-2}\right)^{c_j}
}{(j!)^{c_j}c_j!}
\right\}\\
& \stackrel{\eqref{eq:totalclustsize}}{=} k!(k-1)!2^{k-1} \sum_{\mathbf{c}}
\left\{
\frac{\ell^{c_1}}{c_1!} \cdot \frac{(2 \ln (\ell+1))^{c_2}}{c_2!}
\cdot \prod_{j= 3}^{k/2} \frac{1}{c_j!}\left(\frac{3e^2 \left(\frac{e}{2}\right)^{2j-2}
}{2^{j-1}}\right)^{c_j}
\right\}\\
& \le k!(k-1)!2^{k-1}
\left\{
\sum_{c_1=0}^\infty
\frac{\ell^{c_1}}{c_1!}
\right\}
\cdot
\left\{
\sum_{c_2=0}^\infty
\frac{(2 \ln (\ell+1))^{c_2}}{c_2!}
\right\}
\cdot
\left\{
\prod_{j= 3}^{k/2}
\sum_{c_j=0}^\infty
\frac{1}{c_j!}\left(3e^2 \left(\frac{e^2}{8}\right)^{j-1}\right)^{c_j}
\right\}\\
& = k!(k-1)!2^{k-1}
\exp\left(\ell\right)
\cdot
\exp\left(
2 \ln (\ell+1)
\right)
\cdot
\prod_{j= 3}^{k/2}
\exp\left(
3e^2 \left(\frac{e^2}{8}\right)^{j-1}\right)\\
& \le k (k!)^2 2^{k-1}
e^{\ell}
\exp\left(
\sum_{j= 3}^{k/2}
3e^2 \left(\frac{e^2}{8}\right)^{j-1}
\right).
\end{align*}
Note that in the last line we have used the fact that $\ell \le k-1$, since this is an upper
bound on the number of steps it can take for the absorption process on $k$ vertices to percolate.
We bound the remaining sum by
$$\sum_{j= 3}^{k/2}
3e^2 \left(\frac{e^2}{8}\right)^{j-1}\leq \frac{3e^2}{1-e^2/8}\leq 291,$$
since $e^2/8<1$.
Thus we obtain
\begin{align*}
\pc'_{k,\ell} \le (k!)^2 2^{k}e^{\ell}\cdot \frac{ke^{291}}{2},
\end{align*}
which completes the proof of Lemma~\ref{lem:configsnor}.\qed

\subsection{Proof of Theorem~\ref{thm:configswithr}}\label{sec:configswithr}

We can now prove Theorem~\ref{thm:configswithr}.
Observe that Lemma~\ref{lem:configsnor} gives a bound on $\pc_{j,j}'$
which is better than Claim~\ref{claim:spantrees} for large $j$:
\begin{equation}\label{eq:Xjbetterbound}
\pc'_{j,j} \le (j!)^2 2^{j}e^{j}\cdot \frac{je^{291}}{2}.
\end{equation}
We use~\eqref{eq:Xjbetterbound} and Lemma~\ref{lem:configsnor} to obtain
\begin{align*}
\pc_{k,\ell,r} & \stackrel{\eqref{eq:configspartition}}{\le} \binom{k+r}{r} \pc'_{k,\ell} 2r^2\ell \pc_{r,r}'\\
& \le \binom{k+r}{r} \left((k!)^2 2^{k}e^{\ell}\cdot\frac{ke^{291}}{2}\right) 2r^2\ell \left((r!)^2 2^{r}e^{r}\cdot\frac{re^{291}}{2}\right)\\
& = \binom{k+r}{r} (k!)^2 (r!)^2 2^{k+r} e^{r+\ell} \cdot \frac{k\ell r^3 e^{582}}{2},
\end{align*}
as claimed in Theorem~\ref{thm:configswithr}.\qed

\subsection{Proof of Theorem~\ref{thm:main}~\ref{thm:main:subcrit}}\label{sec:pfsubcrit}

To prove the subcritical case of Theorem~\ref{thm:main}, we need the following
strengthening of the notion of a minimal percolating configuration.

\begin{definition}
An \emph{optimal configuration} in a double graph $(V,E_1,E_2)$
is a minimal percolating configuration $(U,E_1',E_2')$, where $U\subset V$ and $E_i' \subset E_i$ for $i=1,2$,
together with a vertex $v\in U$ and a set of clusters partitioning $U\setminus \{v\}$ such that an 
absorption process with this input will percolate, and
furthermore the following holds.
Let $\ell$ be the number of steps it takes for this absorption process to percolate,
and let $v=v_1,\ldots,v_{|U|}$ be the vertices of $U$ in the order that they are added
to the percolating set in the absorption process.
Then no vertex in $V\setminus U$ has an edge to $\{v_1,\ldots,v_{\ell-1}\}$
in both $E_1$ and $E_2$.
\end{definition}
In other words, an optimal configuration is a minimal percolating configuration which allows
the absorption process to percolate and includes all of the vertices which
could be added as clusters in the process before step $\ell$. Note that it is \emph{not} necessarily
a maximal percolating set, since it is still possible that
some clusters of size larger than one could have been added to the
process, that more single vertices could have been added in step $\ell$,
or indeed that the absorption process could have continued beyond $\ell$ steps.

Set
$$k_0:= 2\ln n.$$
If $G(n,p_1,p_2)$ percolates,
then by Lemma~\ref{lem:canconstruct} there exists an input of starting vertex and clusters
such that running an absorption process with this input will lead to an optimal configuration
(and in particular a minimal percolating configuration)
of size greater than $k_0$.
Let us consider the first time at which this process becomes
larger than $k_0$, in step $\ell$, say. Then it reached size $k\le k_0$ in either the $(\ell-1)$-th or
the $\ell$-th step, and we next
added a cluster of size $r$ in the $\ell$-th step such
that $k+r> k_0$.
We aim to bound the number of optimal configurations with parameters $k,\ell,r$
and sum over all $k,\ell,r$, observing that
\begin{equation}\label{eq:klr}
1\le r, \ell \le k \le k_0 < k+r.
\end{equation}
For $0\le i \le \ell$, define $X_{i}$ to be the set of vertices in the percolating set after $i$
steps of the absorption process,
which is terminated the moment we reach size larger than $k_0$, so in particular
we have $|X_\ell|=k+r$, and furthermore $\ell \le |X_{\ell-1}|\le k$.

Since we have an optimal configuration, none of the vertices outside $X_\ell$
may have both a red and a blue neighbour within the first $\ell-1$ vertices $\{x_1,x_2,\ldots,x_{\ell-1}\}$ of $X_{\ell-1}$.
Note that this holds for a given vertex with probability at most
$$(1-p_1)^{\ell-1}+(1-p_2)^{\ell-1}-(1-p_1)^{\ell-1}(1-p_2)^{\ell-1}=1-(1-(1-p_1)^{\ell-1})(1-(1-p_2)^{\ell-1}).$$
By \eqref{eq:upperprob} we have $p_1(\ell-1)\le p_1 k_0 =o(1)$ and similarly $p_2 (\ell-1)=o(1)$.
Therefore
$$(1-(1-p_1)^{\ell-1})(1-(1-p_2)^{\ell-1})=(1+o(1))(\ell-1)^2p_1p_2.$$
Thus the probability that none of the vertices outside $X_\ell$ has both a red and a blue neighbour
within $\{x_1,\ldots,x_{\ell-1}\}$ is at most
\begin{align*}
\left(1-(1+o(1))(\ell-1)^2p_1 p_2\right)^{n-k-r}
&\leq \exp\left(-(1+o(1))(\ell-1)^2p_1p_2(n-k-r)\right)\\
&\leq \exp\left(-(1-\eps_0)(\ell-1)^2 n p_1 p_2\right),
\end{align*}
where $\eps_0:=\eps/3$.

Therefore the expected number of optimal configurations with parameters $k,\ell,r$ is at most
$$
\binom{n}{k+r}M_{k,\ell,r}(p_1p_2)^{k+r-1} \exp(-(1-\eps_0)(\ell-1)^2 np_1p_2).
$$
Let us define $S$ to be the set of triples $(k,\ell,r)$ satisfying~\eqref{eq:klr}. We need to bound the expression
\begin{align}\label{eq:optconfigs}
& \sum_{(k,\ell,r)\in S}\binom{n}{k+r}M_{k,\ell,r}(p_1p_2)^{k+r-1} \exp(-(1-\eps_0)(\ell-1)^2 np_1p_2)\nonumber\\
\le  & \sum_{(k,\ell,r)\in S} \frac{n^{k+r}}{(k+r)!}\binom{k+r}{r} (k!)^2 (r!)^2 2^{k+r} e^{r+\ell}
\cdot \frac{k\ell r^3 e^{582}}{2} \cdot
\left(\frac{1-\eps}{4n\ln n}\right)^{k+r-1} \exp(-(1-\eps_0)(\ell-1)^2 np_1p_2)\nonumber\\
\le  & \frac{ e^{582}\cdot n\cdot 2\ln n}{1-\eps} \left(\sum_{k=k_0/2}^{k_0}k \cdot k! \left(\frac{1-\eps}{2\ln n}\right)^{k}
\sum_{r=k_0-k}^{k} r! r^3\left(\frac{e(1-\eps)}{2\ln n}\right)^r \right)
\sum_{\ell=1}^{k_0}\ell \exp\left(\ell -(1-\eps_0)(\ell-1)^2 \cdot \frac{1-\eps}{4\ln n}\right).
\end{align}
We first show that the summand involving $\ell$ is increasing. 
Certainly the multiplicative factor of $\ell$ is increasing, so
let us define $x_\ell:=\exp\left(\ell -(1-\eps_0)(\ell-1)^2 (1-\eps)\frac{1}{4\ln n}\right)$. Then for $\ell \le k_0=2\ln n$ we have
$$
\frac{x_{\ell+1}}{x_\ell} = \exp \left(1-(1-\eps_0)(1-\eps)(2\ell-1)\frac{1}{4\ln n}\right)
\ge \exp \left(1-(1-\eps_0)(1-\eps)\right) \ge e^\eps>1.
$$
Therefore the sum over $\ell$ in~\eqref{eq:optconfigs} can be bounded from above by
replacing each summand by the summand with $\ell=k_0$, i.e.\ 
\begin{align}\label{eq:optconfigs:lsum}
\sum_{\ell=1}^{k_0}\ell\exp\left(\ell -(1-\eps_0)(\ell-1)^2 (1-\eps)\frac{1}{4\ln n}\right)\hspace{-2cm} & \nonumber \\
& \le k_0^2 \exp\left(k_0 -(1-\eps_0)(k_0-1)^2 (1-\eps)\frac{1}{2k_0}\right)\nonumber\\
& = k_0^2 \exp\left(k_0\left(1-\frac{(1-\eps_0)(1-\eps)}{2}\right)\right)
\exp\left(\frac{(2k_0-1)(1-\eps_0)(1-\eps)}{2k_0}\right)\nonumber\\
& \le k_0^2\exp\left(k_0\left(\frac{1 +\eps +\eps_0}{2}\right)\right)\cdot e.
\end{align}

On the other hand, considering the sum over $r$ in~\eqref{eq:optconfigs}, we approximate as follows.
Since $r\le k \le k_0$ and $k_0\ge e^2$, by \eqref{eq:stirling} we have $r!\le e \sqrt{r} (r/e)^r \leq k_0 (r/e)^r$, implying

\begin{align}\label{eq:lsum}
\sum_{r=k_0-k}^{k} r!r^3 \left(\frac{e(1-\eps)}{2\ln n}\right)^r 
\le k_0^4\sum_{r=k_0-k}^{k} \left(\frac{r}{e}\right)^r\left(\frac{e(1-\eps)}{k_0}\right)^r
&= k_0^4\sum_{r=k_0-k}^{k} \left(\frac{(1-\eps)r}{k_0}\right)^r\nonumber\\
&\le k_0^4 \left(\frac{1-\eps}{k_0}\right)^{k_0-k}\sum_{r=k_0-k}^{k} \frac{r^r}{k_0^{r-k_0+k}}\nonumber\\
&\le k_0^4 \left(\frac{1-\eps}{k_0}\right)^{k_0-k}
\sum_{r=k_0-k}^{k} \frac{k_0^{k_0}}{k^k} \left(\frac{k}{k_0}\right)^{r+k}.
\end{align}

Note that since $k_0-k \leq r \leq k$ we have
\begin{equation*}
1 \le \frac{k}{k}+\frac{r}{k}\left(1-\frac{k+r}{2k}\right) \le \frac{k+r}{k}-\frac{k_0-k}{k}\cdot \frac{k+r}{2k}.
\end{equation*}
Since $0 \leq \frac{k_0-k}{k} \leq 1,$ the Taylor expansion of $\ln(1+x)$ leads to 
\begin{equation*}
  (k+r) \ln{\left(1+\frac{k_0-k}{k}\right)} \geq (k+r) \frac{k_0-k}{k}-\frac{k+r}{2}\left(\frac{k_0-k}{k}\right)^2 = (k_0-k)\left(  \frac{k+r}{k}-\frac{k_0-k}{k}\cdot \frac{k+r}{2k} \right) \geq k_0-k.
\end{equation*}
Therefore, by \eqref{eq:stirling} we conclude that
$$\frac{k_0^{k_0}}{k^k}\left(\frac{k}{k_0}\right)^{r+k} \le \frac{k_0^{k_0}}{k^k} e^{k-k_0}\leq e\sqrt{k}\frac{k_0!}{k!} \le k_0 \frac{k_0!}{k!}.$$

Thus~\eqref{eq:lsum} gives
$$\sum_{r=k_0-k}^{k} r!r^3 \left(\frac{e(1-\eps)}{2\ln n}\right)^r\le k_0^6 \left(\frac{1-\eps}{k_0}\right)^{k_0-k}\frac{k_0!}{k!}.$$

Thus the sum over $k$ in~\eqref{eq:optconfigs} can be bounded by
\begin{align}\label{eq:optconfigs:ksum}
\sum_{k=k_0/2}^{k_0}k \cdot k! \left(\frac{1-\eps}{2\ln n}\right)^{k}
\sum_{r=k_0-k}^{k} r! r^3\left(\frac{e(1-\eps)}{2\ln n}\right)^r
& \le \sum_{k=k_0/2}^{k_0}k\cdot k! \left(\frac{1-\eps}{k_0}\right)^{k} k_0^6
\left(\frac{1-\eps}{k_0}\right)^{k_0-k} \frac{k_0!}{k!}\nonumber \\
& \le k_0^8 \left(\frac{1-\eps}{k_0}\right)^{k_0} k_0!
\end{align}

Substituting~\eqref{eq:optconfigs:lsum} and~\eqref{eq:optconfigs:ksum} into~\eqref{eq:optconfigs},
we obtain that the expected number of optimal configurations with parameters $(k,\ell,r)\in S$
is at most
\begin{align*}
\frac{ e^{582}nk_0}{1-\eps}
\left(k_0^8 \left(\frac{1-\eps}{k_0}\right)^{k_0} k_0!\right)
k_0^2\exp\left(k_0\left(\frac{1 +\eps +\eps_0}{2}\right)\right) \cdot e \hspace{-4cm} & \\
& \le nk_0^{12} \left(\left(\frac{1-\eps}{k_0}\right)^{k_0}k_0 \left(\frac{k_0}{e}\right)^{k_0}\right)
\exp\left(k_0\left(\frac{1+\eps+\eps_0}{2}\right)\right)\\
& \le nk_0^{13} \left(\frac{1-\eps}{e} \cdot \exp \left(\frac{1+\eps+\eps_0}{2}\right)\right)^{k_0}\\
& \le nk_0^{13} \left(e^{-1-\eps} \cdot \exp\left(\frac{1+\eps+\eps_0}{2}\right)\right)^{k_0}\\
& = n (2\ln n)^{13} \exp\left(\left(-\frac{1}{2}-\frac{\eps}{2}+\frac{\eps_0}{2}\right)2\ln n\right)\\
& = n (2\ln n)^{13} n^{\left(-1-\eps+\eps_0\right)}\\
& = (2\ln n)^{13} n^{-\eps+\eps_0}.
\end{align*}
Note that since we chose $\eps_0=\eps/3$ and $\eps$ is constant, this term tends to $0$.

Thus by Markov's inequality, with high probability there is no such percolating set, and therefore the double-graph does not percolate.

\section{Supercritical case: Proof of Theorem~\ref{thm:main}~\ref{thm:main:supercrit}}\label{sec:supercrit}

In this section we prove Theorem~\ref{thm:main}~\ref{thm:main:supercrit}.
Recall that we assume that $p_1 \ge p_2 \ge \frac{\ln n}{n}$ and the statement says that conditioned
on the individual graphs $G_1 \sim G(n,p_1)$ and $G_2 \sim G(n,p_2)$ being connected, with high probability the jigsaw process percolates on the double graph
$G(n,p_1,p_2)$.

We first note that for $p_1,p_2 \ge \frac{\ln n}{n}$, the probability of $G_1$ and $G_2$ being connected is bounded
below by a (non-zero) constant. Thus any event that holds with high probability also holds with high probability
in the probability space conditioned on $G_1$ and $G_2$ being connected. Therefore for simplicity in the
following arguments we will suppress this conditioning.

The proof consists of three stages in which we construct a nested sequence of percolating sets
$U_1 \subset U_2 \subset U_3 = V,$ growing in size as we reveal more edges.
In Section~\ref{sec:pastbottleneck} we define and analyse
a \emph{construction algorithm} (Algorithm \ref{algo:construct}) which constructs percolating sets,
and show in Lemma~\ref{lem:constrU1} that with high probability it constructs
at least one percolating set of reasonably large size. Subsequently, in Section~\ref{sec:sprinkling}
we show that this percolating set expands to cover almost all its red neighbours
(Lemma~\ref{lem:furext}). Finally we use a sprinkling argument
to extend this percolating set until it eventually covers all vertices and so prove the supercritical case.

We will reveal the red graph with two rounds of exposure.
To this end, set 
\begin{align*}
p_1^{(1)} &:= \left(1-\frac{\eps}{2}\right)p_1, \text{ and } p_1^{(2)} := \frac{\eps}{2} p_1
\end{align*}
and 
\begin{align*}
p_2^{(1)} &:= p_2, \text{ and } p_2^{(2)} := 0.
\end{align*}
Consider the double graphs $G(n,p_1^{(i)},p_2^{(i)})$ for $i=1,2$. For $i=1, 2$, we denote by $N_1^{(i)} (U)$
the neighbourhood of $U$ within $G(n,p_1^{(i)})$.

Let us note that $G(n,p_1^{(1)},p_2^{(1)})\cup G(n,p_1^{(2)},p_2^{(2)}) \sim G(n,p_1^*,p_2)$, where
$$
p_1^* = 1-(1-p_1^{(1)})(1-p_1^{(2)}) \le p_1^{(1)}+p_1^{(2)} = p_1.
$$
Since percolation is a monotone increasing property the probability that $G(n,p_1,p_2)$ percolates is at least the probability that $G(n,p_1^{(1)},p_2^{(1)})\cup G(n,p_1^{(2)},p_2^{(2)})$ percolates.

\subsection{Getting past the bottleneck} \label{sec:pastbottleneck}
We set
\begin{align*}
\omega = \omega_n :=\ln \ln n, \qquad  k_1:=\frac{1}{\omega p_1^{(1)}} \qquad\text{and} \qquad \delta := \frac{\eps}{20}.
\end{align*}
Our aim is to construct a percolating set of size $k_1$ by means of an algorithm. Refining an algorithm of Bollob\'as, Riordan, Slivken and Smith~\cite{BollobasRiordanSlivkenSmith17}, we grow a percolating set $X_t$ by adding vertices
in each step $t$.

We first describe the algorithm informally, suppressing the index $t$ for simplicity.
We begin with $X$ being a single vertex. 
At the start of step $t$, the set $X$ will consist of vertices $x_1,\ldots,x_{s}$ which form a percolating set,
with $s\ge t$.
In addition, the set $R$ consists of vertices which are adjacent to at least one of
$x_1,\ldots,x_{t-1}$ in red, but not in blue.

In step $t$, we will reveal the red neighbours $Q$ of $x_t$ (outside of $X\cup R$).
We will also reveal any blue edges between $Q$ and $x_1,\ldots,x_t$ -- vertices
incident to such a blue edge will be added to $X$, while the remaining vertices of $Q$
are added to $R$.
We also reveal any blue edges between
$R$ and $x_t$, and vertices incident to such an edge will be moved from $R$ to $X$.

There are two main differences between our algorithm and the algorithm described by
Bollob\'as, Riordan, Slivken and Smith~\cite{BollobasRiordanSlivkenSmith17}:
first, in their algorithm, they only add one vertex to $X$ in each step;
and second, they did not keep track of the set $R$ and reveal blue edges between
$x_t$ and $R$, but simply discarded it along with any other vertices that could have been added to $X$.
In order to prove the sharper version of the theorem with the exact threshold we require this more detailed algorithm,
and significantly more precise analysis until it passes the bottleneck.

We will run the algorithm several times. Each attempt is called a \emph{round}, indexed by $\ell$.
At the end of each round, we will discard the percolating set generated in the round -- this ensures
independence between rounds. In order to make the analysis of each round identical, we will artificially exclude some vertices
from each round to ensure that we always have the same number of vertices available.

\vspace{0.2cm}
\begin{algorithm}[H]\label{algo:construct}
	\SetAlgoLined
	\textbf{Input:} Double graph $(V,E_1,E_2)$.\\
	Set $\ell = 1$ and $V_1'=V.$ \\
	\While{$|V_\ell'|\ge n-n^{1-\delta}$}{
		Fix an arbitrary set $V_\ell \subset V_\ell'$ of size $\sizevl$. \\
		Pick an arbitrary vertex $x_1=x_1(\ell)\in V_\ell$ and set $X_0=X_0(\ell):=\{x_1\}$ and $s_0=s_0(\ell):=1$. \\
		Also set $R_0=R_0(\ell):=\emptyset$ and $t=1$.\\
		\While{$t\le s_{t-1}< k_1$}{
			Set $Q_t=Q_t(\ell)=N_1^{(1)}(x_t)\cap \left(V_\ell\setminus (X_{t-1}\cup R_{t-1})\right)$.\\
			Set $B_t=B_t(\ell)=N_2(x_1,\ldots,x_t) \cap Q_t$.\\
			Set $C_t=C_t(\ell)=N_2(x_t) \cap R_t$. \\
			Set $X_{t}=X_{t}(\ell)=X_{t-1}\cup B_t \cup C_t$ and $s_t=s_t(\ell):=|X_t|$.\\
			Set $R_{t}=R_{t}(\ell)=(R_{t-1}\cup Q_t)\setminus (B_t \cup C_t)$.\\
			Proceed to step $t+1$.
		}
		Set $T= T(\ell)=t-1$.\\
		Set $V_{\ell+1}'=V_{\ell}'\setminus X_{T(\ell)}$ and proceed
		to round $\ell+1$.
	}
	Set $L=\ell-1$.\\
	\textbf{Output:} $X_{T(1)},X_{T(2)},\ldots,X_{T(\ell-1)}$.
	\caption{{\sc The Construction Algorithm}}
\end{algorithm}
\vspace{0.2cm}

Note that $T(\ell)$ is the number of steps in round $\ell$, while $L$ is the number of rounds run by the construction algorithm.

We will apply the construction algorithm to $G(n,p_1^{(1)},p_2)$ and reveal edges only as they are required by the algorithm.
The following lemma shows that the construction algorithm is well-defined
and builds a percolating set.

\begin{lem}\label{lem:algowelldef}
	Algorithm~\ref{algo:construct} satisfies the following conditions:
	\begin{enumerate}
		\item \label{algcond:terminates} The algorithm terminates after a finite number of steps;
		\item \label{algcond:idptrounds} The rounds of the algorithm are mutually independent;
		\item \label{algcond:percolating} The set $X_{T(\ell)}$ forms a percolating set for $1 \le \ell \leq L.$
	\end{enumerate}
\end{lem}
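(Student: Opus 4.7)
The plan is to verify the three conditions separately, with each argument being structural rather than probabilistic.

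For condition (i), I would observe that inside a single round the variable $t$ is incremented by $1$ per iteration of the inner while loop, while $s_{t-1}$ is bounded above by $|V_\ell|=\sizevl$. Hence after at most $|V_\ell|$ iterations either $s_{t-1}\ge k_1$ or $t>s_{t-1}$ forces the loop to exit, yielding a finite $T(\ell)$. For the outer loop, each round removes at least the vertex $x_1(\ell)$ from $V_\ell'$, so $|V_\ell'|$ strictly decreases by at least one every round and the guard $|V_\ell'|\ge n-n^{1-\delta}$ must fail after at most $n^{1-\delta}$ rounds.

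For condition (ii), the essential observation is that every edge probed during round $\ell$ has at least one endpoint in $X_{T(\ell)}$: the three sets $Q_t$, $B_t$ and $C_t$ are defined as intersections involving $N_1^{(1)}(x_t)$, $N_2(x_1,\ldots,x_t)$ and $N_2(x_t)$ respectively, and in every case the corresponding neighbourhood is that of some $x_s(\ell)\in X_{T(\ell)}$. In round $\ell+1$ the algorithm examines only vertices in $V_{\ell+1}\subset V_{\ell+1}'=V_\ell'\setminus X_{T(\ell)}$, so every edge consulted in round $\ell+1$ has both endpoints in $V\setminus X_{T(\ell)}$. Iterating this gives that the edges exposed in distinct rounds are pairwise disjoint, and since the edges of $G(n,p_1^{(1)},p_2)$ are mutually independent, the outcomes of different rounds depend on disjoint collections of independent edge indicators, yielding the claim.

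For condition (iii), I would prove by induction on $t$ that $X_t$ is a percolating set in the double graph induced on $X_t$. The base case $X_0=\{x_1\}$ is trivial. For the inductive step, fix any vertex $v$ added in step $t$ and observe that both a red edge and a blue edge between $v$ and $X_{t-1}$ lie inside $X_t$. If $v\in B_t$, this is the red edge $vx_t$ together with a blue edge $vx_i$ with $1\le i\le t$, and both $x_t,x_i\in X_{t-1}$ because the loop condition $s_{t-1}\ge t$ guarantees $x_1,\ldots,x_t\in X_{t-1}$. If $v\in C_t$, then $v$ must have entered $R$ during some earlier step $t'<t$ via $Q_{t'}$, so there is a red edge $vx_{t'}$ with $x_{t'}\in X_{t-1}$, which combined with the blue edge $vx_t$ provides the required pair. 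Running the jigsaw process on $X_t$, all edges of the subprocess on $X_{t-1}$ are present, so by the inductive hypothesis $X_{t-1}$ eventually merges into a single cluster; each new $v\in B_t\cup C_t$ then has a red edge and a blue edge to that cluster and so can be absorbed into it. Consequently $X_t$, and in particular $X_{T(\ell)}$, is a percolating set.

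The main obstacle is really only the bookkeeping in (ii): one has to be meticulous that every edge the algorithm ever queries is accounted for by having an endpoint in $X_{T(\ell)}$, because otherwise the independence of distinct rounds could fail. Parts (i) and (iii) are essentially formal consequences of the design of the construction algorithm.
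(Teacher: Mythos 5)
Your proof is correct and takes essentially the same approach as the paper's: you bound the total work by counting removed vertices/iterations for (i), note that all edges queried within round $\ell$ have an endpoint in $X_{T(\ell)}$, which is explicitly deleted from the pool before round $\ell+1$, for (ii), and induct on the step index $t$ for (iii). The paper's proof is terser but the decomposition and key observations match yours exactly.
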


\begin{proof}
	\ref{algcond:terminates}:
	For each round $\ell$ of the algorithm we perform $T(\ell)$ steps and $|X_{T(\ell)}|\ge T(\ell)$
	vertices are discarded, so the algorithm terminates after at most $n^{1-\delta}$ steps.
	
	\ref{algcond:idptrounds}:
	Within a round, any edge is revealed at most once and every queried pair is incident to $X_{T(\ell)}.$
	When the algorithm proceeds to the next round, it
	removes all the vertices of $X_{T(\ell)}$ from the vertex pool $V_\ell'$. Thus the algorithm queries every edge at most once.
	
	\ref{algcond:percolating}:
	Assume that $X_{t-1}$ forms a percolating set. Every element in $B_t$ has a red edge to $x_t$ and a blue edge into $\{ x_1, \dots, x_{t}\} \subset X_{t-1}$. Similarly the elements of $C_t$ have a blue edge to $x_t$ and a red edge into $\{ x_1, \dots, x_{t-1}\}$. 
	Consequently, $X_t$ also forms a percolating set and the assertion follows by induction over $t.$  
\end{proof}

The heart of the supercritical case is the following result.

\begin{lem}\label{lem:constrU1}
	Running the construction algorithm on $G(n,p_1^{(1)},p_2),$ with high probability there is a round $\ell$ such that
	$X_{T(\ell)}(\ell)$ has size at least $k_1$
	and $\left|R_{T(\ell)}{(\ell)}\right|\ge T(\ell)np_1^{(1)}/2$. 
\end{lem}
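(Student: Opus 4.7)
The plan is to treat the rounds of the construction algorithm as independent trials (by Lemma~\ref{lem:algowelldef}\ref{algcond:idptrounds}) and to establish two separate claims: (A) each round is \emph{successful}---meaning both $|X_{T(\ell)}(\ell)| \ge k_1$ and $|R_{T(\ell)}(\ell)| \ge T(\ell) n p_1^{(1)}/2$---with probability at least some $p^\star \gg n^{-1+2\delta}$; and (B) with high probability the outer loop runs for $L \ge N := n^{1-2\delta}$ rounds. Combining (A) and (B) by the independence of rounds, the probability that no round succeeds is at most $(1-p^\star)^N \le \exp(-p^\star N) = o(1)$, which proves the lemma.

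For (A), fix one round and drop the index $\ell$. Set $s_t := |X_t|$, $r_t := |R_t|$ and $\Delta_t := s_t - s_{t-1}$. Since the red edges from $x_t$ and the blue edges from $\{x_1,\dots,x_t\}$ (respectively $x_t$) to $V_\ell \setminus (X_{t-1}\cup R_{t-1})$ (respectively $R_{t-1}$) are queried for the first time in step $t$, each vertex outside $X_{t-1}\cup R_{t-1}$ lies in $B_t$ independently with probability $p_1^{(1)}(1-(1-p_2)^t) = (1+o(1))\,t p_1^{(1)} p_2$ (using $k_1 p_2 = o(1)$, which follows from \eqref{eq:upperprob}), while each vertex of $R_{t-1}$ lies in $C_t$ independently with probability $p_2$. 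A Chernoff argument (Lemma~\ref{lem:chernoff}) simultaneously confining $r_t$ to $(1\pm o(1))\, t n p_1^{(1)}$ throughout the \emph{bottleneck phase} $t\le t^\star := 2\ln n$ then gives $\mathbb{E}[\Delta_t\mid \cH_{t-1}] \ge (1+\Omega(\eps))\, t/(2\ln n)$, so the ``deficit walk'' $Z_t := s_t - t$ has drift $\mathbb{E}[\Delta_t]-1$ which switches sign at $t \simeq 2\ln n/(1+\Omega(\eps))$. I compute the probability that $Z_t$ stays $\ge 1$ throughout the bottleneck by an exponential change of measure on the (approximately Poisson) increments $\Delta_t$, optimising the tilt so as to pin the walk close to zero. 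The rate function for this tilted trajectory evaluates to $(1-c\eps+o(1))\ln n$ for an explicit $c>0$ depending only on $\eps$, yielding $p^\star \ge n^{-1+c\eps}$, which dominates $n^{-1+2\delta}$ since $\delta = \eps/20$. Past the bottleneck the drift exceeds $\Omega(\eps t/\ln n)$, and Lemma~\ref{lem:chernoff} shows that, conditional on surviving the bottleneck with the required $r_t$-envelope, the walk reaches $s_t = k_1$ with the required lower bound on $|R_{T(\ell)}|$ with probability $1-o(1)$; the final $r$-lower-bound is also routine, since the cumulative loss $\sum_s(|B_s|+|C_s|) \le k_1$ is much smaller than $T(\ell) n p_1^{(1)}$ by \eqref{pre:p1lower}.

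For (B), observe that $\Delta_t$ is stochastically dominated uniformly in the history by a binomial with mean $O(t/\ln n)$, so a coupling of the deficit walk $Z_t$ with a subcritical branching process shows $\mathbb{E}|X_{T(\ell)}| = O(\ln n)$. Markov's inequality then yields that after $N = n^{1-2\delta}$ rounds the total vertex consumption is $O(N\ln n) = o(n^{1-\delta})$ whp, so the outer loop does not terminate before $N$ rounds complete, i.e.\ $L \ge N$ whp. The principal obstacle is the sharp tilt computation in (A): the multiplicative slack $(1+\eps)$ in \eqref{eq:threshold} must be converted into a polynomial gain $n^{\Omega(\eps)}$ in the survival probability, which requires evaluating the moment generating function of the joint $(s_t,r_t)$ dynamics to multiplicative accuracy $1+o(1)$ uniformly over $t\le t^\star$, and maintaining the Chernoff concentration of $r_t$ throughout the entire bottleneck phase in spite of the two coupled sources of depletion $|B_t|$ and $|C_t|$.
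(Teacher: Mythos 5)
Your high-level plan---treat rounds as independent trials, lower-bound the per-round success probability, and show enough rounds are run---is the same intuition underlying the paper's proof, and the lower bound on $\left|R_{T(\ell)}\right|$ is indeed immediate from $\cH$. However, there is a genuine gap in part (B), and part (A) is a sketch that differs methodologically from the paper.

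\textbf{Gap in (B).} You assert that a coupling of the deficit walk $s_t - t$ with a \emph{subcritical} branching process gives $\mathbb{E}|X_{T(\ell)}| = O(\ln n)$, hence whp $L \ge n^{1-2\delta}$. But the increment $\Delta_t$ has mean $\approx (1+\eps/5)(2t-1)/(4\ln n)$, which is \emph{greater} than $1$ once $t$ exceeds roughly $2\ln n/(1+\eps/5)$; past the bottleneck the process is supercritical and the coupling you describe fails. In fact $\mathbb{E}|X_{T(\ell)}|$ has a contribution of order $k_1\cdot\Pr[T(\ell)\ge k_0]$, and you have no a priori \emph{upper} bound on $\Pr[T(\ell)\ge k_0]$---this is exactly the quantity you are trying to estimate. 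The obvious route to such an upper bound is a first-moment count of percolating configurations (the machinery of Section~\ref{sec:subcrit}), which would be a long detour. The paper avoids this near-circularity entirely: it conditions on the failure event $\cD$, splits the $L$ rounds into $L_0$ (died before size $k_0$) and $L_1$ (reached $k_0$), and argues by cases. If $L_1\ge\ln n$, then Lemma~\ref{lem:frombottleneck} makes $\cD$ unlikely; if $L_0\ge n^{1-3\delta/2}$ and $L_1<\ln n$, then Lemma~\ref{lem:tobottleneck} and a Chernoff bound make $L_1<\ln n$ unlikely; and if both are small, then $L_0 k_0 + L_1 k_1 = o(n^{1-\delta})$, contradicting the termination criterion. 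No upper bound on per-round success is ever needed. You would need to import this case analysis (or an equivalent) to close your argument.

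\textbf{On (A).} Your exponential-tilting / change-of-measure computation of $\Pr[\cE_{k_0}\mid\cH]\ge n^{-1+c\eps}$ is only sketched, and it is genuinely the technical heart of Lemma~\ref{lem:tobottleneck}. The paper proceeds differently: it first dominates the increments by independent $\Po_{\le\rho}$ variables (Lemma~\ref{lem:Xtincrements}), restricts to vectors of increments summing to exactly $k_0$, bounds the relevant sum $S_{k_0}$ from below via a double-factorial estimate, and then shows the ``ballot-problem'' event $\cA_{k_0}^*$ occurs with probability $\ge 1/k_0$ by a cyclic-shift argument, together with a concentration bound to discard overshoots $\ge k_0^{2/3}$. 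A tilting argument could in principle reach the same answer, but you would still have to control the time-inhomogeneity of the drift and the cutoff at $\rho$ to multiplicative accuracy, which is where the work lies. Your separate claim that the process survives from $k_0$ to $k_1$ with probability $\Theta(\eps)$ is essentially the paper's Lemma~\ref{lem:frombottleneck} (branching-process survival), and you implicitly use this two-stage structure, so you are closer to the paper there than your tilting framing suggests.

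\textbf{A subsidiary point.} Even with (A) and (B) in hand, concluding $\Pr[\text{no round succeeds}]\le(1-p^\star)^N$ needs a small amount of care, because $L$ is a stopping time determined by the round outcomes; you should bound $\Pr[\{L<N\}]+\Pr[\text{first }N\text{ rounds all fail}]$ rather than conditioning on $\{L\ge N\}$. This is easily repaired, but it is another place where the paper's conditioning-on-$\cD$ bookkeeping is cleaner.
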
  

The proof of Lemma \ref{lem:constrU1} will be given in Section~\ref{subsec:proofLemmaU1}. As preparation, we first 
approximate the sizes of various sets in the construction algorithm.

\subsubsection{Poisson approximation}
By Lemma~\ref{lem:algowelldef} the rounds of the construction algorithm are independent. Thus, the following results
hold uniformly for all $1\leq \ell \leq L$ and we will therefore lighten the notation by dropping $\ell$.
Moreover, we use the notation $a = b \pm c$ to mean that $b-c \leq a \leq b+c$,
and similarly $a = (b \pm c)d$ to mean $(b-c)d \leq a \leq (b+c)d$.

We aim to simplify the analysis of the algorithm by approximating the sizes of the various sets constructed.
In particular, our main aim is Lemma~\ref{lem:Xtincrements}, in which we approximate the distribution of the
number of vertices added to the percolating set in each step. In order to achieve this, we first need to know that various
other sets are about as large as we expect. 

\begin{definition}
	Set $\error := \frac{\varepsilon}{10}$ and define the events
	\begin{align*}
	\cQ_t & =\left\{|Q_t| = \left(1\pm \frac\error 2\right)np_1^{(1)}\right\},\\
	\cB_t & =\left\{ |B_t| < \frac \error 4np_1^{(1)}\right\},\\
	\cC_t & =\left\{ |C_t| < \frac \error 4np_1^{(1)}\right\},\\
	\cR_t & =\left\{ |R_t| = (1\pm \error)tnp_1^{(1)}\right\},\\
	\cH & =\bigcap_{t\leq T} \cH_t=\bigcap_{t\leq T} \cQ_t \cap \cB_t\cap \cC_t \cap \cR_t.
	\end{align*}
\end{definition}
The events $\cQ_t$ and $\cR_t$ state that $|Q_t|$ and $|R_t|$ are concentrated around their means.
Conditioned on $\cQ_t$ and $\cR_t$,
the expected sizes of $B_t$ and $C_t$ are about $tnp_1^{(1)}p_2$ and $(t-1)np_1^{(1)}p_2,$ respectively.
Thus, observing that $tp_2 \le k_1 p_1 = o(1)$, the events
$\cB_t$ and $\cC_t$ only require the corresponding random variables to be below a very crude upper bound.
As a preliminary, we show that these events are very likely to hold in every round of the algorithm.

\begin{lem}\label{lem:setsizes}
	During one round of the construction algorithm on $G(n,p_1^{(1)},p_2),$ the event $\cH$ holds with probability at least $1-\exp \left(-\Omega \left( n^{1/3} \right) \right)$.
\end{lem}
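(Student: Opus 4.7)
The plan is to apply Chernoff bounds to $\cQ_t, \cB_t, \cC_t$ in turn (observing that $\cR_t$ becomes a deterministic consequence of these together with $\cH_{t-1}$) and then take a union bound over $t \le T$.

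First I would observe that $\cR_t$ is redundant once the other events hold. Since $B_t \subset Q_t$ and $C_t \subset R_{t-1}$ are disjoint, the algorithm gives $|R_t| = |R_{t-1}| + |Q_t| - |B_t| - |C_t|$; substituting the bounds from the definitions of $\cR_{t-1}, \cQ_t, \cB_t, \cC_t$ and using $\error/2 + 2\cdot\error/4 = \error$ yields $|R_t| = (1\pm\error)tnp_1^{(1)}$. Hence it suffices to control $\cQ_t\cap\cB_t\cap\cC_t$ at each step.

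The heart of the proof is to estimate $|Q_t|, |B_t|, |C_t|$ conditional on the history, by checking through the edge-revelation order that each has a genuinely binomial (or binomially dominated) structure. For $|Q_t|$: at the start of step $t$, no red edges from $x_t$ into $V_\ell \setminus (X_{t-1} \cup R_{t-1})$ have yet been inspected, so $|Q_t| \sim \Bi(|V_\ell|-s_{t-1}-|R_{t-1}|, p_1^{(1)})$. Conditional on $\cH_{t-1}$ the trial count is $n(1-o(1))$, since $s_{t-1} \le k_1 = o(n)$ and, by $\cR_{t-1}$, $|R_{t-1}| = O(k_1 np_1^{(1)}) = O(n/\omega)$; Lemma~\ref{lem:chernoff} with deviation $\Theta(np_1^{(1)})$ then gives $\Pr[\overline{\cQ_t}\mid\cH_{t-1}] \le \exp(-\Omega(np_1^{(1)}))$. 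For $|B_t|$: each $v \in Q_t$ has its blue edges to $\{x_1,\ldots,x_t\}$ still unexposed, since $v$ only just exited the pool of unprobed vertices, so $|B_t|$ is stochastically dominated by $\Bi(|Q_t|, 1-(1-p_2)^t)$ with mean at most $(1+\error/2)np_1^{(1)} \cdot tp_2 = O(np_2/\omega) = o(np_1^{(1)})$, using $t \le k_1 = 1/(\omega p_1^{(1)})$ together with $p_2 \le p_1$; Lemma~\ref{lem:chernoff} again gives $\exp(-\Omega(np_1^{(1)}))$. For $|C_t|$: for each $v \in R_{t-1}$ the edge $\{x_t,v\}$ is also fresh, because previous steps only exposed $v$'s blue edges to $\{x_1,\ldots,x_{t-1}\}$, never to $x_t$, which lay inside $X_{t-1}$ throughout. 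Hence $|C_t|\sim \Bi(|R_{t-1}|, p_2)$ with mean $O(np_2/\omega) = o(np_1^{(1)})$, yielding the identical Chernoff tail.

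Finally, a union bound over $t \le T = O(k_1) = O(\sqrt{n\ln n})$ multiplies the per-step failure probability by at most a polynomial factor in $n$. Combined with $np_1^{(1)} = (1-\eps/2)np_1 = \Omega(\sqrt{n/\ln n})$ from~\eqref{pre:p1lower}, and noting $\sqrt{n/\ln n} \gg n^{1/3}$, this gives $\Pr[\overline{\cH}] \le \exp(-\Omega(n^{1/3}))$. The main obstacle, and the step requiring most care, is the edge-revelation bookkeeping: one must justify for each of $\cQ_t, \cB_t, \cC_t$ that the particular edges governing the relevant binomial are indeed fresh at step $t$. This is most delicate for $|C_t|$, because $x_t$ may have entered $X$ many steps earlier, and one must confirm that its blue edges to vertices currently in $R_{t-1}$ were not tested during any of those intervening steps.
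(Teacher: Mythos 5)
Your proposal is correct and follows essentially the same approach as the paper: observe that $\cR_t$ is a deterministic consequence of $\cQ_t, \cB_t, \cC_t$ together with $\cR_{t-1}$, apply Chernoff bounds to each of $|Q_t|$, $|B_t|$, $|C_t|$ conditional on the history (with deviation $\Theta(np_1^{(1)}) = \Omega(n^{1/3})$), and union-bound over the at most $k_1$ steps. Your additional attention to the edge-revelation bookkeeping makes explicit a point the paper passes over more quickly, though the parenthetical justification for the freshness of $\{x_t,v\}$ with $v \in R_{t-1}$ (``$x_t \in X_{t-1}$ throughout'') is slightly imprecise: what actually matters is simply that the algorithm only ever queries blue edges from $R$-vertices to $x_1,\ldots,x_{t''}$ for $t'' < t$, so $x_t$ is never among the queried endpoints regardless of when $x_t$ entered $X$.
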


\begin{proof}
	We have
	\begin{equation*}
	\pr [\cH] = \prod_{t=1}^T \pr [\cH_t \mid \cH_1, \dots, \cH_{t-1} ]= \prod_{t=1}^T \pr [\cQ_t,\cB_t, \cC_t,\cR_t \mid \cH_1, \dots, \cH_{t-1}].
	\end{equation*}
	
	We will give a uniform lower bound for each of these terms. Recalling the definitions of
	$R_t,Q_t,B_t,C_t$ from the construction algorithm,
	conditional on $\cQ_t,\cB_t, \cC_t$ and $\cR_{t-1}\subset \cH_{t-1}$ we have
	$$|R_t|=|R_{t-1}|+|Q_t|-|B_t|-|C_t|=(1\pm \error)(t-1)np_1^{(1)}+\left(1\pm \frac{\error}{2}\right)np_1^{(1)}\pm \frac{\error}{4} np_1^{(1)} \pm \frac{\error}{4} np_1^{(1)}=(1\pm \error)tnp_1^{(1)},$$
	i.e.\ $\cR_t$ holds deterministically, implying
	\begin{align}
	\pr [\cQ_t,\cB_t, \cC_t,\cR_t \mid \cH_1, \dots, \cH_{t-1}]&=\pr [\cQ_t,\cB_t, \cC_t \mid \cH_1, \dots, \cH_{t-1}]\nonumber\\
	&=\pr [\cQ_t \mid \cH_1, \dots, \cH_{t-1}] \pr [\cB_t \mid \cQ_t, \cH_1, \dots, \cH_{t-1}] \pr [ \cC_t \mid \cQ_t, \cB_t, \cH_1, \dots, \cH_{t-1}].\label{eq:eventsprobprod}
	\end{align}
	
	Our goal is to show that each of these terms has probability $1-\exp\left(-\Omega(n^{1/3})\right)$.
	We will repeatedly use the fact that
	\begin{equation*}
	np_1^{(1)} = \Omega(np_1) \stackrel{\eqref{pre:p1lower}}{=} \Omega(n^{1/3}).
	\end{equation*}
	First note that
	$$|Q_t| \sim \Bi \left( \sizevl - |X_{t-1}| - |R_{t-1}| , p_1^{(1)} \right).$$
	Since $|X_{t-1}|\le k_1=o(n)$, and conditional on $\cH_{t-1}$, we have $|R_{t-1}|=(1\pm \error)(t-1)np_1^{(1)}=O(k_1 n p_1^{(1)})=o(n)$, thus
	$$\mathbb{E}[|Q_t|]=(1+o(1))np_1^{(1)}=\Omega(n^{1/3}).$$
	Together with the Chernoff bound (Lemma~\ref{lem:chernoff}), this implies
	$$\pr [\overline{\cQ}_t \mid \cH_1, \dots, \cH_{t-1}]\le \pr\left[ ||Q_t|-\mathbb{E}[|Q_t|]|\ge \frac{\error}{4}np_1^{(1)}\right]\le \exp\left(-\Omega \left(np_1^{(1)}\right)\right)=\exp\left(-\Omega(n^{1/3})\right).$$
	
	Next we consider $\cB_t$. Clearly
	$$|B_t| \sim \Bi \left( |Q_t| , 1-(1-p_2)^t \right),$$
	therefore conditional on $\cQ_t$ we have
	$$\mathbb{E}[|B_t|]=O\left(|Q_t|p_2t\right)=O\left(n p_1^{(1)}p_2 k_1\right)=o(np_2)=o(np_1^{(1)})$$
	and thus Lemma~\ref{lem:chernoff} implies that
	$$\pr [\overline{\cB}_t \mid \cQ_t, \cH_1, \dots, \cH_{t-1}]\le \exp\left(-\Omega \left( np_1^{(1)}\right)\right) =\exp\left(-\Omega(n^{1/3})\right).$$
	
	Finally 
	$$|C_t| \sim \Bi \left( |R_{t-1}| , p_2 \right)$$
	and conditional on $\cR_{t-1}\subset \cH_{t-1}$ we have
	$$\mathbb{E}[|C_t|]=O\left(|R_t|p_2\right)=O\left(k_1 n p_1^{(1)}p_2\right)=o(np_2)=o(np_1^{(1)}),$$
	and again Lemma~\ref{lem:chernoff} implies that
	$$\pr [\overline{\cC}_t \mid \cQ_t, \cB_t, \cH_1, \dots, \cH_{t-1}] \le \exp\left(-\Omega \left( np_1^{(1)}\right)\right)
	=\exp\left(-\Omega(n^{1/3})\right).$$
	Taking a union bound and substituting into~\eqref{eq:eventsprobprod}, we have
	$$
	1-\pr [\cQ_t,\cB_t, \cC_t,\cR_t \mid \cH_1, \dots, \cH_{t-1}] \le 3\exp\left(-\Omega(n^{1/3})\right).
	$$
	The statement follows by applying the union bound over all steps in the round of the algorithm, of which there are at most $k_1$, and observing that
	$$
	3k_1 \exp\left(-\Omega(n^{1/3})\right)= \exp\left(-\Omega(n^{1/3})\right).
	$$
\end{proof}

It will be convenient in future analysis to condition on $\cH$. Lemma \ref{lem:setsizes} tells us that this is reasonable.
In order to compare binomials with Poisson random variables, we need the following notation:
for a non-negative integer-valued random variable $X$ and $r\in \NN$ let $X_{\le r}$ be the cutoff transform of $X,$ i.e. the random variable with 
\begin{align*}
\Pr [X_{\le r} = t] = \begin{cases} \frac{\Pr [X=t]}{\Pr [X\le r]}, & \text{ for } t\leq r \\
0 & \text{ else.}
\end{cases}
\end{align*} 

	The following claim shows how binomials dominate Poisson variables with suitable cutoff.
	
	\begin{claim}\label{claim:bindompoi}
		Let $X\sim \Bi ( N, p) $ and $Y \sim \Po_{\le r} ((1-\theta)Np)$, with $N>0$ and $r/N < \theta<1$.
		Then $ X \succ Y.$
	\end{claim}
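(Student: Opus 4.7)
The plan is to establish a monotone likelihood ratio (MLR) property between $X$ and $Y$ and then invoke the standard fact that MLR implies first order stochastic dominance. Let $Y' \sim \Po((1-\theta)Np)$, so that $Y$ is simply $Y'$ conditioned on $Y' \le r$. It suffices to show that the ratio $\Pr[X=k]/\Pr[Y'=k]$ is non-decreasing in $k$ on $\{0,1,\ldots,r\}$: the conditioning merely rescales $\Pr[Y'=k]$ by a common positive constant on this range and sets $\Pr[Y=k]=0$ beyond $r$, so the MLR property transfers automatically from $(X, Y')$ to $(X, Y)$, with the ratio jumping to $+\infty$ above $r$.

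To check the MLR on $\{0,\ldots,r\}$, I would compute the ratios of consecutive point probabilities,
$$\frac{\Pr[X=k+1]}{\Pr[X=k]} = \frac{(N-k)p}{(k+1)(1-p)}, \qquad \frac{\Pr[Y'=k+1]}{\Pr[Y'=k]} = \frac{(1-\theta)Np}{k+1},$$
and verify that the former is at least the latter. A short routine manipulation reduces the desired inequality to $k \le N\theta(1-p) + Np$. The hypothesis $\theta > r/N$ gives $N\theta > r$, while $\theta < 1$ gives $r < N$, so the right-hand side strictly exceeds $r$ and the required bound holds for every $k \le r$.

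Having established MLR, I would deduce $X \succ Y$ via the usual crossover argument: since $\Pr[X=k]/\Pr[Y=k]$ is non-decreasing and both pmfs sum to $1$, there is some index $k^*$ at or beyond which $\Pr[X=k]\ge \Pr[Y=k]$ and below which $\Pr[X=k]\le \Pr[Y=k]$. For $k\ge k^*$, the tail difference $\Pr[X\ge k]-\Pr[Y\ge k]=\sum_{j\ge k}(\Pr[X=j]-\Pr[Y=j])$ is a sum of non-negative terms; for $k<k^*$, rewriting it as $-\sum_{j<k}(\Pr[X=j]-\Pr[Y=j])$ via the complement gives a sum of non-negative terms as well. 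The main obstacle is the careful bookkeeping around the cutoff at $r$: the condition $\theta>r/N$ is exactly the quantitative margin that keeps the binomial's likelihood ratio above the Poisson's throughout the support of $Y$, and one must make sure that the additional mass $X$ carries on $\{r+1,\ldots,N\}$ is accounted for correctly (it only strengthens the dominance, but needs to appear explicitly in the tail-sum comparison).
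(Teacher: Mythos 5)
Your proof is correct and follows essentially the same route as the paper: compare ratios of adjacent point probabilities to establish a monotone likelihood ratio between the binomial and the (truncated) Poisson, then pass from MLR to stochastic domination by a single-crossing argument on the tails. The paper phrases that last step as a short contradiction on the tail sums while you spell out an explicit crossover index, but this is the same underlying observation; the algebraic reduction to $k \le N\theta(1-p)+Np$ matches the paper's $\frac{1-i/N}{(1-\theta)(1-p)}>1$ exactly.
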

	
	\begin{proof}
		Clearly for every $i>r$ we have $\pr[X\geq i]\ge \pr[Y\ge i]=0$. For $0\le i<r$ we have
		\begin{align*}
		\frac{\Pr [Y=i]}{\Pr [Y=i+1]} \frac{\Pr [X=i+1]}{\Pr [X = i]} &= \frac{i+1}{Np(1-\theta)}\frac{(N-i)p}{(i+1)(1-p)}= \frac{1-\frac{i}{N}}{(1-\theta)(1-p)} >1,
		\end{align*}
		implying
		$$\frac{\pr[X=i+1]}{\pr[X=i]}\ge \frac{\pr[Y=i+1]}{\pr[Y=i]}.$$

		Now suppose for a contradiction that for some $\ell \le r$ we have $\Pr[Y\ge \ell] > \Pr[X\ge \ell]$.
		It follows that $\Pr[Y= \ell] > \Pr[X= \ell]$, and hence it also follows that
		$\Pr[Y=i] > \Pr[X=i]$ for all $0 \le i \le \ell$.
		But then we have
		$$
		1=\Pr[Y\ge 0] = \sum_{i=1}^{\ell-1} \Pr[Y=i] + \Pr[Y\ge \ell] > \sum_{i=1}^{\ell-1} \Pr[X=i] + \Pr[X\ge \ell] = \Pr[X\ge 0]=1
		$$
		which is a contradiction.
	\end{proof}

It is well-known that the sum of Poisson variables is also Poisson, but we will need a similar result
for Poisson variables with a cutoff.

	\begin{claim}\label{claim:sumpo}
		For every $r\ge 0$ we have
		$$\Po_{\le r}(\lambda)+\Po_{\le r}(\mu) \succ \Po_{\le r}((\lambda+\mu)).$$
	\end{claim}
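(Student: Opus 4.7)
The plan is to realise each cutoff Poisson as an ordinary Poisson conditioned on being at most $r$, and then compare two different conditionings of the same convolution. Let $Y_1\sim \Po(\lambda)$ and $Y_2\sim \Po(\mu)$ be independent, so that $Y_1+Y_2\sim \Po(\lambda+\mu)$ by the standard convolution identity. Let $X_1\sim \Po_{\le r}(\lambda)$ and $X_2\sim \Po_{\le r}(\mu)$ be independent, and let $Z\sim \Po_{\le r}(\lambda+\mu)$. Then $(X_1,X_2)$ has the conditional law of $(Y_1,Y_2)$ given $\cA:=\{Y_1\le r\}\cap\{Y_2\le r\}$, while $Z$ has the conditional law of $Y_1+Y_2$ given $\cB:=\{Y_1+Y_2\le r\}$.

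The key observation is that on $\{0,1,\ldots,r\}$ the laws of $X_1+X_2$ and of $Z$ are proportional to the same unconditioned convolution. Indeed, for $0\le k\le r$ the constraints $i,j\le r$ in
$$\pr[X_1+X_2=k]=\sum_{i+j=k}\frac{\pr[Y_1=i]\,\pr[Y_2=j]}{\pr[\cA]}$$
are automatic (since $i,j\le i+j=k\le r$), so this equals $\pr[Y_1+Y_2=k]/\pr[\cA]$. By the same reasoning $\pr[Z=k]=\pr[Y_1+Y_2=k]/\pr[\cB]$. Since $\cB\subset \cA$, we have $\pr[\cB]\le \pr[\cA]$, and hence $\pr[X_1+X_2=k]\le \pr[Z=k]$ for all $0\le k\le r$. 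Thus $X_1+X_2$ has less mass than $Z$ on $\{0,\ldots,r\}$, and the missing mass is redistributed to $\{r+1,\ldots,2r\}$, where $Z$ has no mass at all.

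Stochastic domination now follows immediately. For $k>r$ we have $\pr[Z\ge k]=0\le \pr[X_1+X_2\ge k]$, while for $1\le k\le r$
$$\pr[X_1+X_2\ge k]=1-\sum_{i=0}^{k-1}\pr[X_1+X_2=i]\ge 1-\sum_{i=0}^{k-1}\pr[Z=i]=\pr[Z\ge k],$$
and for $k=0$ both probabilities equal $1$. There is no serious obstacle: the only real content is the inclusion $\cB\subset \cA$, which captures the intuition that constraining the sum to be small is more restrictive than constraining each summand separately, so $Z$ is pushed toward smaller values than $X_1+X_2$.
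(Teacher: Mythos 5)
Your proof is correct and uses essentially the same argument as the paper: both identify $\pr[\Po_{\le r}(\lambda)+\Po_{\le r}(\mu)=i]=\pr[\Po(\lambda+\mu)=i]/\bigl(\pr[\Po(\lambda)\le r]\pr[\Po(\mu)\le r]\bigr)$ for $i\le r$ and then use the inclusion $\{Y_1+Y_2\le r\}\subseteq\{Y_1\le r,\,Y_2\le r\}$ to bound the normalising constant, giving the pointwise inequality $\pr[X_1+X_2=i]\le\pr[Z=i]$ on $\{0,\ldots,r\}$. You simply spell out the final deduction of stochastic domination from this pointwise inequality, which the paper leaves implicit with ``as required.''
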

	
	\begin{proof}
		Note that for $i\le r$ we have
		$$\Pr[\Po_{\le r}(\lambda)+\Po_{\le r}(\mu)=i]=\frac{\Pr[\Po(\lambda+\mu)=i]}{\pr [\Po(\lambda)\le r] \pr [\Po(\mu)\le r]}
		\le \frac{\Pr[\Po(\lambda+\mu)=i]}{\pr [\Po(\lambda+\mu)\le r]} = \Pr[\Po_{\le r}(\lambda+\mu)=i]
		$$
		as required.
	\end{proof}

Our cutoff point will be at $\rho:=\omega^{-1}np_1^{(1)}$.

\begin{lem}\label{lem:Xtincrements}
	For any round and any step $t\leq T$ of the construction algorithm on $G(n,p_1^{(1)},p_2),$ conditional on $\cH$ we have
	$$|X_{t}|-|X_{t-1}| \succ \Po_{\le \rho} \left(\left(1+\frac{\eps}{5}\right)\frac{2t-1}{4\ln n}\right).$$
\end{lem}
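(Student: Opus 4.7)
Since $|X_t|-|X_{t-1}|=|B_t|+|C_t|$, I would first identify the conditional distributions of these two summands given the history $\mathcal{F}_{t-1}$ of the first $t-1$ steps together with the red neighbours $Q_t$ of $x_t$ revealed at the start of step~$t$. The key book-keeping is that each $v\in Q_t$ cannot have appeared in any earlier $Q_s$ (such a $v$ would lie in $X_{t-1}\cup R_{t-1}$ and be excluded from $Q_t$), so its blue edges to $\{x_1,\dots,x_t\}$ have not yet been queried; and for each $v\in R_{t-1}$ the blue edges to $x_1,\dots,x_{t-1}$ have been queried in previous steps but the edge to $x_t$ has not. It follows that
$$
|B_t|\sim\Bi\bigl(|Q_t|,\,1-(1-p_2)^t\bigr)\quad\text{and}\quad |C_t|\sim\Bi\bigl(|R_{t-1}|,\,p_2\bigr)
$$
conditionally independently. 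On $\cH$, in particular on $\cQ_t$ and $\cR_{t-1}$, I may therefore replace $|Q_t|$ and $|R_{t-1}|$ by the deterministic lower bounds $(1-\error/2)np_1^{(1)}$ and $(1-\error)(t-1)np_1^{(1)}$ via monotonicity of the binomial in its size parameter.

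Next I would apply Claim~\ref{claim:bindompoi} to each of the two binomials separately, with cutoff $\rho/2$ and some $\theta=o(1)$ (for instance $\theta=\omega^{-1/2}$, which satisfies $\rho/(2N)<\theta$ for both choices of $N$ since $N=\Omega(np_1^{(1)})$ and $\rho=\omega^{-1}np_1^{(1)}$), obtaining Poisson-cutoff dominations at parameters
$$
\lambda_1\ge(1-o(1))(1-\error/2)\,t\,np_1^{(1)}p_2,\qquad \lambda_2=(1-o(1))(1-\error)(t-1)\,np_1^{(1)}p_2,
$$
where for $\lambda_1$ I use $1-(1-p_2)^t\ge tp_2(1-O(tp_2))=tp_2(1-o(1))$, valid since $tp_2\le k_1p_2=o(1)$. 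Claim~\ref{claim:sumpo} combines these, and the cutoff at $\rho$ weakens the cutoff at $\rho/2$, yielding $|B_t|+|C_t|\succ\Po_{\le\rho}(\lambda_1+\lambda_2)$. The arithmetic finish uses the crude bound $(1-\error/2)t+(1-\error)(t-1)\ge(1-\error)(2t-1)$ together with $np_1^{(1)}p_2=(1-\eps/2)(1+\eps)/(4\ln n)$ and $\error=\eps/10$: the prefactor of $(2t-1)/(4\ln n)$ is at least $(1-o(1))(1-\eps/10)(1-\eps/2)(1+\eps)=1+2\eps/5-O(\eps^2)$, which exceeds $1+\eps/5$ as long as $\eps$ is small enough (we may WLOG assume this, since the statement is monotone in $\eps$). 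The case $t=1$ is covered by the same argument with $\lambda_2=0$, since $R_0=\emptyset$.

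The main technical nuisance is that the full conditioning on $\cH$ also imposes the upper-bound events $\cB_t$ and $\cC_t$ on the very quantities I want to dominate from below. However, since these upper bounds sit at level $(\error/4)np_1^{(1)}$, a factor of $\omega/4$ above the cutoff $\rho$, and fail with probability only $\exp(-\Omega(np_1^{(1)}))$, the truncation perturbs the tail probabilities $\pr[|B_t|+|C_t|\ge k]$ for $k\le\rho$ by at most an exponentially small additive term. This loss is comfortably absorbed into the slack between $1+2\eps/5$ and $1+\eps/5$ in the Poisson parameter; alternatively, one can simply redo the computation conditional only on $\cH_{t-1}\cap\cQ_t\cap\cR_{t-1}$, which already contains every event in $\cH$ that materially affects the distribution of $|B_t|+|C_t|$.
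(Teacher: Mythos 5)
Your proposal follows the same skeleton as the paper's proof: write $|X_t|-|X_{t-1}|=|B_t|+|C_t|$, identify the conditional binomial laws, pass to deterministic size parameters on $\cH$, apply Claims~\ref{claim:bindompoi} and~\ref{claim:sumpo}, and finish with arithmetic. Your explicit handling of the conditioning on $\cB_t$ and $\cC_t$ is a genuine care point that the paper elides; the observation that these upper bounds sit at height $\error np_1^{(1)}/4 \gg \rho$, fail with probability $\exp(-\Omega(np_1^{(1)}))$, and therefore perturb the relevant tail probabilities by an exponentially small, absorbable amount, is correct.

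There is, however, one error in the Poisson bookkeeping, and it goes in the wrong direction. You arrive at $|B_t|+|C_t|\succ\Po_{\le\rho/2}(\lambda_1+\lambda_2)$ and assert that ``the cutoff at $\rho$ weakens the cutoff at $\rho/2$,'' concluding domination of $\Po_{\le\rho}(\lambda_1+\lambda_2)$. But for the cutoff transform the monotonicity runs the other way: $\Po_{\le r_2}(\lambda)\succ\Po_{\le r_1}(\lambda)$ whenever $r_1\le r_2$, since a larger cutoff keeps more probability mass in the upper tail before renormalising (with $p_i:=\pr[\Po(\lambda)=i]$, one has $\frac{p_1}{p_0+p_1}\le\frac{p_1+p_2}{p_0+p_1+p_2}$, and the general case is the same computation). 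So $\Po_{\le\rho}(\lambda_1+\lambda_2)$ is stochastically \emph{larger} than $\Po_{\le\rho/2}(\lambda_1+\lambda_2)$, and dominating the latter does not yield the former. Fortunately the fix is trivial, because there was no need to halve the cutoff at all: applying Claim~\ref{claim:bindompoi} with cutoff $\rho$ is admissible with your $\theta=\omega^{-1/2}$ (since $\rho/N\le\omega^{-1}/(1-\error)=o(\omega^{-1/2})$) or, as the paper does, with $\theta=\error$; Claim~\ref{claim:sumpo} then hands you $\Po_{\le\rho}(\lambda_1+\lambda_2)$ directly, after which your arithmetic closes the argument.
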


\begin{proof}
	Conditional on $\cH$, the increment $|X_{t}|-|X_{t-1}| = |B_t| + |C_t|$ dominates  the sum of two independent binomials
	$ B_t^- \sim \Bi\left((1- \frac \error 2)np_1^{(1)},(1-\frac{\error}{2})tp_2\right)$
	and $C_t^- \sim \Bi \left((1- \error)(t-1)np_1^{(1)},p_2\right).$

	Set $\theta = \error$ and for $t>1$ we have
$$\frac{\rho}{(1-\error)(t-1)np_1^{(1)}}\le  \frac{\omega^{-1}}{1-\error} =  o(1) < \error$$ 
and
$$\frac{\rho}{(1-\error/2)np_1^{(1)}} = \frac{\omega^{-1}}{1-\error/2} =  o(1) < \error.$$
	Hence Claims~\ref{claim:bindompoi}, together with $C_1^-=\Po_{\le \rho}(0)=0$, and Claim~\ref{claim:sumpo} yield
	\begin{align*}
	B_t^- + C_t^- & \succ \Po_{\le \rho} \left((1-\error)^2tnp_1^{(1)}p_2\right) + \Po_{\le \rho} \left((1-\error)^2(t-1)np_1^{(1)}p_2\right)\\
	& \succ \Po_{\le \rho} \left((1-\error)^2(2t-1)np_1^{(1)}p_2\right).
	\end{align*}
	
	Now Lemma~\ref{lem:Xtincrements} follows immediately, since
	$$(1-\error)^2 np_1^{(1)}p_2 = (1-\error)^2 \left(1- \frac{\varepsilon}{2}\right) \frac{1+ \varepsilon} {4\ln n} \geq   \frac{1+\varepsilon/5}{4\ln n}.$$
\end{proof}

\subsubsection{Two-stage analysis}
We break the proof of Lemma~\ref{lem:constrU1} into two stages. To this end,  for $k \in \NN$ define
\begin{align*}
\cE_k := \lbrace |X_{T}| \geq k+1   \rbrace ,
\end{align*}
i.e.\ $\cE_k$ is the event that the current round of the construction algorithm
finds a percolating set of size at least $k+1$, or equivalently that it survives for at least $k$ steps.
First, we show that percolating sets of size $k_0:= 2 \ln n$ (just above the bottleneck) are not too unlikely.
Recall that $\delta = \frac{\eps}{20}$.

\begin{lem}\label{lem:tobottleneck}
	We have $ \Pr[\cE_{k_0}\mid \cH] \geq n^{-1+2\delta}. $
\end{lem}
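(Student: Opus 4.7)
The plan is to apply Lemma~\ref{lem:Xtincrements} to reduce the problem to a clean question about an independent Poisson random walk. Conditional on $\cH$, the increments $Y_s := |X_s|-|X_{s-1}|$ stochastically dominate independent variables $\tilde Y_s\sim\Po_{\le\rho}(\mu_s)$ with $\mu_s=(1+\eps/5)(2s-1)/(4\ln n)$, and the event $\cE_{k_0}$ contains $\{\tilde W_t\ge t\text{ for all }0\le t\le k_0\}$, where $\tilde W_t=\sum_{s\le t}\tilde Y_s$. Since the cutoff $\rho=\omega^{-1}np_1^{(1)}$ is vastly larger than any value appearing in a size-$k_0$ walk, the cutoff only costs a $1+o(1)$ factor, and I may work with ordinary Poisson variables for the lower bound.

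Next, I would compute a lower bound on $\Pr[\tilde W_t\ge t,\ \forall t\le k_0]$ via an explicit path-counting argument. The idea is to sum $\prod_{s=1}^{k_0}\Pr[\tilde Y_s=a_s]=e^{-\sum\mu_s}\prod_s\mu_s^{a_s}/a_s!$ over a controlled family of integer sequences $(a_1,\dots,a_{k_0})$ satisfying the ballot-type constraint $\sum_{s\le t}a_s\ge t$ for all $t\le k_0$. The unconstrained total $\sum_s \mu_s$ equals $(1+\eps/5)\ln n$, so by Stirling applied to a Poisson point mass at the LDP-optimal continuous path $\phi(t)=t$, the cost of the constrained event should scale like
\[
-\log\Pr \;\le\; \int_0^{k_0}\bigl[-\log\mu(s)-1+\mu(s)\bigr]\,ds \;=\; (1-\eps/5)\ln n + O(\log\ln n),
\]
using $\mu(s)=(1+\eps/5)s/(2\ln n)$ and $k_0=2\ln n$. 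Combined with the paper's heuristic $(2k_0np_1p_2\,e^{-k_0np_1p_2})^{k_0}n \asymp n^{\eps}$ for the expected number of percolating sets, this leaves ample slack between the informal target $n^{-1+\eps}$ and the required $n^{-1+\eps/10}$.

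The main obstacle is controlling the discretization corrections: a single fixed path such as $a_s=1$ for every $s$ incurs an additional factor $e^{-k_0}=n^{-2}$ from $\log s!-(s\log s-s)$ at small arguments, which would yield only $n^{-3+O(\eps)}$ and fall well short of the target. The correct remedy is to aggregate a large family of admissible paths rather than rely on one. Concretely, I expect to first condition on the endpoint $\tilde W_{k_0}=m$ for $m$ slightly above $k_0$, so that $(\tilde Y_1,\dots,\tilde Y_{k_0})$ becomes a multinomial with cell probabilities $\mu_s/\sum\mu_r$, and then invoke a cycle-lemma / ballot-type argument to show that a positive fraction of such multinomial arrangements automatically satisfy the constraint $\tilde W_t\ge t$ (losing at most a polynomial factor in $k_0$). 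The Poisson pointwise probability $\Pr[\tilde W_{k_0}=m]$ can be estimated tightly via Stirling, and the product of these two factors gives a lower bound of order $n^{-(1-\eps/5)+o(1)}$, which exceeds $n^{-1+2\delta}$ since $2\delta=\eps/10<\eps/5$.
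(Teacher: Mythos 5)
Your reduction to an independent Poisson walk via Lemma~\ref{lem:Xtincrements}, your diagnosis that a single-path estimate loses a fatal $e^{-k_0}$ factor, and your plan to condition on the endpoint and aggregate paths via a ballot-type argument are all in the spirit of the paper's proof, which likewise works with $\cA_{k_0}^*=\{\ii:\sum_t i_t=k_0,\ \sum_{s\le t}i_s\ge t\ \forall t\}$ and a cycle-lemma argument. But there is a genuine gap precisely in the step you flag as the crux: conditioning on $\tilde W_{k_0}=m$ yields a multinomial with \emph{non-uniform} cell probabilities $\mu_s/\sum_r\mu_r\propto(2s-1)$, and you then invoke the cycle lemma to claim a positive (polynomial-in-$k_0$) fraction of such arrangements satisfy the ballot constraint. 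The cycle lemma, however, is a combinatorial statement about cyclic rotations of a \emph{fixed} sequence; since the conditional law is not cyclically invariant, it does not by itself control the probability of the constrained event. Because the cell probabilities span a factor $2k_0-1$ and the ballot constraint forces mass into the low-probability early cells, the ``good'' rotation of a sample can be exponentially less likely than the sample itself (for instance, rotating $i_{k_0}=k_0$ to $i_1=k_0$ costs a factor $(2k_0-1)^{-k_0}$), so the claimed polynomial loss is not automatic and is in fact where the whole difficulty resides.

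The paper resolves exactly this issue by decoupling the non-uniform weights from the combinatorics. It writes $\prod_t(2t-1)^{i_t}/i_t!=\bigl(\prod_t(2t-1)^{i_t}\bigr)\cdot\frac{1}{k_0^{k_0}}\binom{k_0}{i_1,\dots,i_{k_0}}\cdot\frac{k_0^{k_0}}{k_0!}$, introduces a further restriction $\tilde{\cA}_{k_0}$ (partial sums overshoot by at most $m=k_0^{2/3}$) on which the weight product $\prod_t(2t-1)^{i_t}$ is bounded below by $(2k_0-2m-1)!!=e^{-o(k_0)}(2k_0-1)!!$, and reinterprets the remaining multinomial sum as a probability under a \emph{uniform} balls-in-bins allocation $\UU$. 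The cycle lemma applies verbatim to that exchangeable model, giving $\Pr[\UU\in\cA_{k_0}^*]\ge 1/k_0$, while a Chernoff bound gives $\Pr[\UU\notin\tilde{\cA}_{k_0}]\le k_0 e^{-k_0^{1/3}/3}$. The switch from the non-exchangeable conditional law to the uniform balls-in-bins model, together with the $\tilde{\cA}_{k_0}$ truncation that makes this switch harmless, are both absent from your outline and constitute the load-bearing step of the proof.
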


\begin{proof}
	Let $Z_1,Z_2,\ldots$ be a family of independent random variables with distribution
	$$Z_t \sim \Po_{\le \rho} \left(\left(1+\frac{\eps}{5}\right)\frac{2t-1}{4\ln n}\right).$$
	A sufficient condition for the construction algorithm to survive $k$ steps in a round is that the sum of increments $|X_t|-1= \sum_{1\le s \leq t} (|X_s|-|X_{s-1}|)$ never drops below $t$ for $1\le t \le k$. Due to Lemma \ref{lem:Xtincrements} it  holds that 
	\begin{align}\label{eq:probek}
	\Pr [\mathcal{E}_k \mid \cH] \geq \Pr\left[ \bigwedge_{t=1}^k \sum_{s=1}^t Z_s \geq t \right].
	\end{align}
	We write $\ii$ for a vector $(i_1, \dots, i_k) \in [k]^k$ and set 
	\begin{align*}
	\mathcal{A}_k:=\left\{\ii \in [k]^k: \sum_{t=1}^k i_t=k \right\} \qquad \text{and} \qquad \mathcal{A}_k^*:=\left\{\ii \in \mathcal{A}_k: \bigwedge_{t=1}^k \sum_{s=1}^t i_s \geq t\right\}.
	\end{align*}
	Consequently it holds that $ \Pr\left[ \bigwedge_{t=1}^k \sum_{s=1}^t Z_s \geq t \right] \ge
	\sum_{\ii \in \mathcal{A}_k^*} \prod_{t=1}^k \Pr [Z_t=i_t]$.
	The additional, seemingly arbitrary restriction that the entries $i_t$ sum up to $k$ will turn out to be very useful for the following analysis.
	Set
	$$S_{k_0}:=\sum_{\ii \in \mathcal{A}_{k_0}^*}\prod_{t=1}^{k_0} \frac{(2t-1)^{i_t}}{i_t!}.$$
	Since $k_0\le \rho$ we have
	\begin{align}
	\Pr\left[ \bigwedge_{t=1}^{k_0} \sum_{s=1}^t Z_s \geq t \right]&\geq \sum_{\ii \in \mathcal{A}_{k_0}^*} \prod_{t=1}^{k_0} \exp \left(-\left(1+\frac \eps 5\right)\frac {2t-1}{4\ln{n}}\right)\frac{\left( \left(1+\frac \eps 5\right)\frac{2t-1}{4\ln{n}}\right)^{i_t}}{i_t!}
	\frac{1}{\pr\left[\Po\left(\left(1+\frac{\eps}{5}\right)\frac{2t-1}{4\ln n}\le \rho \right)\right]} \nonumber \\
	&\ge \exp\left(-\frac{1+\frac \eps 5}{4\ln{n}}k_0^2\right) S_{k_0} \left(\frac{1+\frac \eps 5}{4\ln{n}}\right)^{k_0} \cdot 1.\label{eq:probek0}
	\end{align}
	Moreover, defining $m:=k_0^{2/3}$ and 
	\begin{align*}
	\tilde{\mathcal{A}}_{k_0}:= \left\{\ii \in \mathcal{A}_k: \bigwedge_{t=1}^k \sum_{s=1}^t i_s < t +m\right\},
	\end{align*}
	we observe that for $\ii \in \cA_{k_0}^* \cap \tilde{\cA}_{k_0} $ the product $\prod_{t=1}^{k_0}(2t-1)^{i_t}$ is bounded below by $ 1^{m+1}\cdot 3\cdot 5\cdots (2(k_0-m)-1) =  (2k_0-2m-1)!!.$  Hence 
	\begin{align}\label{eq:sk0help}
	S_{k_0} & \ge (2k_0-2m-1)!! \sum_{\ii \in \cA_{k_0}^* \cap \tilde{\cA}_{k_0}} \prod_{t=1}^{k_0}\frac{1}{i_t!}
	= \frac{(2k_0-2m)!}{2^{k_0-m}(k_0-m)!}\cdot \frac{k_0^{k_0}}{k_0!}
	\left(\frac{1}{k_0^{k_0}}\sum_{\ii \in \cA_{k_0}^* \cap \tilde{\cA}_{k_0}} \binom{k_0}{i_1, \dots, i_{k_0}} \right).
	\end{align}

	Let $\UU = (U_1, \dots, U_{k_0})\in \cA_{k_0}$ be a random vector created by
	independently assigning $k_0$ labelled balls into $k_0$ labelled bins,
	where $U_s$ denotes the number of balls in bin $s$. Then the term in brackets
	describes the probability that $\UU \in \cA_{k_0}^* \cap \tilde{\cA}_{k_0}.$
	Clearly 
	$$\Pr [\UU \in \cA_{k_0}^* \cap \tilde{\cA}_{k_0}] \geq \Pr [\UU \in \cA_{k_0}^*]- \Pr[\UU \notin \tilde{\cA}_{k_0} ]$$
	and thus we need a lower bound on $\Pr [\UU \in \cA_{k_0}^*]$ and an upper bound on $\Pr[\UU \notin \tilde{\cA}_{k_0} ]$.
	
	First, let $t^*$ be the largest index such that
	$$\sum_{s=1}^{t^*}U_{s} - t^* = z :=\min_{1\le t \le k_0} \sum_{s=1}^{t}U_{s} - t.$$
	We claim that $\left( U_{t^*+1}, \dots, U_{k_0}, U_1, \dots U_{t^*}\right) \in \cA_{k_0}^*$ implying $\Pr [\UU \in \cA_{k_0}^*]\geq 1/k_0$.
	For observe that certainly $U_{t^*+1}+ \ldots + U_i \ge i-t^*$ for $t^*+1\le i \le k_0$
	by the definition of $t^*$. Furthermore
	$$U_{t^*+1}+ \ldots + U_{k_0} = k_0-\sum_{s=1}^{t^*}U_s=k_0-t^*-z,$$
	and so for $1 \le i \le t^*$ we have
	$$U_{t^*+1}+ \ldots + U_{k_0} + U_1 + \ldots +U_i \ge k_0-t^* -z + (i+z) \ge k_0-t^* +i$$
	by the definition of $z$, as required.
	Secondly, since $\sum_{s=1}^t U_s \sim \Bi (k_0 , t/k_0),$ by Lemma~\ref{lem:chernoff} and a union bound, we obtain
	\begin{align*}
	\Pr[\UU \notin \tilde{\cA}_{k_0} ] = \Pr \left[ \bigcup_{t=1}^{k_0} \left\{ \sum_{s=1}^t U_s > t + m \right \} \right] \leq \sum_{t=1}^{k_0} \Pr \left[ \sum_{s=1}^t U_s > t + m\right] \leq k_0 \exp \left( - \frac{m^2}{3k_0}\right) \leq  k_0\exp (- k_0^{1/3} /3).
	\end{align*}
	Consequently, 
	\begin{align*}
	\frac{1}{k_0^{k_0}}\sum_{\ii \in \tilde{\mathcal{A}}_{k_0}} \binom{k_0}{i_1, \dots, i_{k_0}} = \Pr [\UU \in \cA_{k_0}^* \cap \tilde{\cA}_{k_0}] \geq \frac{1}{k_0}-k_0 \exp(-k_0^{1/3}) \geq k_0^{-2}.
	\end{align*}
	Hence, \eqref{eq:sk0help} and~\eqref{eq:stirling} yield 
	\begin{align}\label{eq:sk0final}
	S_{k_0} \ge \frac{(2(k_0-m))^{2(k_0-m)}}{2^{k_0-m}(k_0-m)^{k_0-m}}\frac{e^{m-1}}{k_0(k_0-m)} \frac{1}{k_0^2}
	& \geq \left(2k_0-2m\right)^{k_0-m}\frac{e^ {m-1}}{k_0^4}\nonumber \\
	& = \exp (k_0 \ln (2k_0) - o(k_0)).
	\end{align}
	Combining \eqref{eq:probek}, \eqref{eq:probek0} and \eqref{eq:sk0final}
	gives us
	\begin{align*}
	\Pr[\cE_{k_0} \mid \cH]
	&\ge 
	\exp\left( k_0 \left(-\frac{1+\frac \eps 5 }{4\ln{n}}k_0 + \ln (2k_0) - o(1)\right)\right)
	\left(\frac{1+\frac \eps 5}{4\ln{n}}\right)^{k_0} \\
	&= \exp\left(2\ln n\left(-\frac 12 \left(1+\frac \eps 5\right) +\ln \left( 1+\frac \eps 5 \right) - o(1) \right) \right)\\
	&= n^{-1}\exp\left(2\ln n\left(-\frac \eps {10} +\ln \left( 1+\frac \eps 5 \right) - o(1) \right) \right)\\
	& \ge n^{-1+2\delta},
	\end{align*}
	where the last line holds since
	$-\frac{\eps}{10}+ \ln\left( 1+\frac \eps 5  \right) - o(1) \geq -\frac{\eps }{10} + \frac{\eps}{5} -\frac{1}{2}\left(\frac{\eps}{5}\right)^2 -o(1)
	> \frac{\eps }{20}=\delta$ for sufficiently small $\eps$. 
\end{proof}

Lemma~\ref{lem:tobottleneck} gave a lower bound on the probability of constructing
a percolating set of size $k_0$ in one round of the construction algorithm.
Subsequently there is a small but constant probability of growing a percolating set of size $k_1$
from a percolating set of size of $k_0$.

\begin{lem}\label{lem:frombottleneck}
	If we run a round of the construction algorithm in $G(n,p_1^{(1)},p_2),$ then we have
	$\Pr \left[  \cE_{k_1} | \cE_{k_0},\cH  \right] = \Theta (\eps) .$
\end{lem}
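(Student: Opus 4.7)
The plan is to reduce the lemma to a survival question for a random walk with small positive drift starting just above an absorbing boundary, then argue via an exponential-martingale calculation. Set $Y_t := |X_t| - t$, so that the algorithm continues past step $t$ iff $Y_t \geq 1$. Conditional on $\cE_{k_0}\cap\cH$ the per-step increments $|X_t|-|X_{t-1}|=|B_t|+|C_t|$ are bounded (in fact $o(k_0)$ on $\cH$), so the algorithm must actually have survived $k_0$ full steps, giving $Y_{k_0}\geq 1$; up to negligible boundary effects, $\cE_{k_1}$ is equivalent to $Y_t \geq 1$ for all $k_0 \leq t \leq k_1$.

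For $t \geq k_0 = 2\ln n$, the Poisson parameter in Lemma~\ref{lem:Xtincrements} satisfies $(1+\eps/5)(2t-1)/(4\ln n) \geq 1 + \eps/10$, so the increment $Y_t - Y_{t-1}$ stochastically dominates $W - 1$ with $W \sim \Po_{\leq \rho}(1+\eps/10)$. I therefore couple $Y_t$ from below by the i.i.d.\ random walk defined by $\tilde{Y}_{k_0}=1$ and $\tilde{Y}_t - \tilde{Y}_{t-1} = W_t - 1$, where $W_t$ are i.i.d.\ copies of $W$, and work with this cleaner object.

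For the lower bound $\Pr[\cE_{k_1}\mid\cE_{k_0},\cH] = \Omega(\eps)$, I would apply the classical exponential-martingale technique. There is a unique $\beta > 0$ satisfying $\mathbb{E}[e^{-\beta(W-1)}] = 1$, and a Taylor expansion of the log-MGF around $\beta = 0$ gives $\beta = 2(\mathbb{E}[W] - 1)/\mathrm{Var}(W) + O(\eps^2) = \Theta(\eps)$ (noting that the cutoff at $\rho \to \infty$ distorts the MGF by only $1 + o(1)$). Then $M_t := e^{-\beta \tilde{Y}_t}$ is a martingale, and applying the optional stopping theorem at the bounded stopping time $\tau := \min\bigl(k_1,\,\inf\{t>k_0:\tilde{Y}_t \leq 0\}\bigr)$ yields
\[
e^{-\beta} \;=\; M_{k_0} \;=\; \mathbb{E}[M_\tau] \;\geq\; \Pr\bigl[\tilde{Y}\text{ hits }0\text{ before time }k_1\bigr],
\]
since $M_\tau \geq 1$ on extinction and $M_\tau \geq 0$ always. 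Hence the survival probability is at least $1 - e^{-\beta} = \Theta(\eps)$, proving the lower bound.

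For the matching upper bound, I would observe that on $\cH$ the increments $|B_t|+|C_t|$ can also be dominated from \emph{above} by a shifted Poisson of parameter $1 + O(\eps)$, by applying the upper-tail Chernoff bounds used in the proof of Lemma~\ref{lem:setsizes} to $|B_t|\sim\Bi(|Q_t|,1-(1-p_2)^t)$ and $|C_t|\sim\Bi(|R_{t-1}|,p_2)$. The symmetric exponential-martingale argument for $e^{\beta\tilde{Y}_t}$ then produces a matching upper bound of $O(\eps)$ on the survival probability. The main obstacle I expect is precisely this upper-bound step: Lemma~\ref{lem:Xtincrements} gives stochastic domination in only one direction, so the reverse comparison has to be set up afresh using the same ingredients (Chernoff plus Poisson--binomial comparison, Claims~\ref{claim:bindompoi} and~\ref{claim:sumpo}) but with the inequalities reversed. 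For the main theorem, however, only the lower bound $\Omega(\eps)$ is truly needed, since Algorithm~\ref{algo:construct} is run over $n^{1-\delta}$ independent rounds and a single successful round suffices to produce the percolating set of size~$k_1$.
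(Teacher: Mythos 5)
Your argument is correct and follows essentially the same route as the paper. Both proofs reduce to the survival probability of a slightly supercritical branching process: the random walk $Y_t := |X_t| - t$ you work with is exactly the exploration walk of the branching process the paper considers, and Lemma~\ref{lem:Xtincrements} supplies the stochastic domination in both cases. The paper then simply cites the classical fact that a branching process with offspring mean $1+\Theta(\eps)$ survives with probability $\Theta(\eps)$, whereas you derive this from scratch via the exponential-martingale change of measure; that is somewhat more self-contained but not a genuinely different method.

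You are also right that only the lower bound $\Omega(\eps)$ is ever used downstream (in the proof of Lemma~\ref{lem:constrU1}), and this matches what the paper's proof actually establishes: its stated $\Theta(\eps)$ is, in effect, only argued as a lower bound there too. Concerning your proposed matching upper bound, there is a real obstruction beyond the missing upper stochastic domination: conditioning on $\cE_{k_0}\cap\cH$ does not control the overshoot $Y_{T\wedge k_0}$. The events $\cB_t,\cC_t$ in $\cH$ only cap the per-step increments at $\Theta(\error\, np_1^{(1)})=\Omega(n^{1/3})$, so the process can reach step $k_0$ with a very large surplus of unprocessed vertices, in which case the conditional survival probability is essentially $1$ rather than $O(\eps)$. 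The $O(\eps)$ bound therefore cannot hold uniformly under this conditioning, and the lemma's $\Theta(\eps)$ should be read as an $\Omega(\eps)$ lower bound, which, as you note, is all the downstream argument requires.
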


\begin{proof}
	We view the percolating set constructed in Algorithm~\ref{algo:construct} as a graph branching process,
	in which the vertex $x_t$ gives birth to the vertices in $B_t\cup C_t$.
	(Note that in fact while they are certainly each adjacent to $x_t$ in one colour, they may not
	be adjacent in both, so we are constructing an auxiliary graph.)
	Lemma~\ref{lem:Xtincrements} yields that, conditional on $\cH$, the branching process
	up to termination of the round, i.e.\ until it dies out or reaches size $k_1$,
	dominates a branching process with offspring distribution $\Po_{\le \rho}(1+\eps/5)$.
	Since the expected number of offspring is $1+\Theta(\eps)$,
	this branching process survives forever with probability $\Theta(\eps)$.
	Therefore conditioned on the percolating set constructed by Algorithm~\ref{algo:construct} reaching size $k_0$,
	with probability $\Theta(\eps)$ it will also reach size $k_1$. 
\end{proof}

\subsubsection{Proof of Lemma~\ref{lem:constrU1}}\label{subsec:proofLemmaU1}
With regard to the first statement of Lemma~\ref{lem:constrU1}, i.e.\ that whp there exists a round $\ell$ in which $|X_{T(\ell)}(\ell)|\ge k_1$,
define the event
$\cD=\bigcap_{\ell \le L}\overline{\cE_{k_1-1}(\ell)}$. Then the assertion is simply $\pr\left[\cD \right]=o(1)$.
Let $\cH^*$ denote the event that
$\cH(\ell)$ holds for every $1\le \ell \le L$. By Lemma~\ref{lem:setsizes}, $\pr\left[\cD \right]=o(1)$ follows from $\pr[\cD \mid \cH^*]=o(1)$.
Observe that if $\cD$ holds, then we discarded at most $k_1$ vertices in each round of the algorithm.
Now let $L_0$ be the number of rounds in which $\cE_{k_0}(\ell)$ does \emph{not} hold, and $L_1$ be the
number of rounds in which $\cE_{k_0}(\ell)$ \emph{does} hold. Thus $L_0+L_1=L$.
Furthermore, if $\cD$ holds, then we have deleted at most $k_0L_0+k_1L_1$ vertices during the algorithm,
and therefore
$$k_0L_0+k_1L_1\ge n^{1-\delta}.$$
We show that this is very unlikely by observing that by Lemma~\ref{lem:frombottleneck},
$\pr\left[\cD \cond L_1,\cH^*\right]\leq (1-c\eps)^{L_1}$ for some constant $c>0$, while
by Lemma~\ref{lem:tobottleneck}, conditional on $\cH^*$, we have $L_1\succ \Bi(L,n^{-1+2\delta})$.

We analyse
$$
\sum_{\ell_0,\ell_1} \pr[L_0=\ell_0, L_1=\ell_1, \cD \mid \cH^*].
$$
We split into various cases. Firstly, if $L_1$ is large, then $\cD$ is very unlikely:
\begin{align*}
\sum_{\ell_0} \sum_{\ell_1\ge \ln n} \pr[L_0=\ell_0, L_1=\ell_1, \cD \mid \cH^*]
& \le \sum_{\ell_1\ge \ln n} \pr\left[\cD\cond L_1=\ell_1,\cH^*\right] \le  \sum_{\ell_1\ge \ln n} (1-c\eps)^{\ell_1} \le \frac{\exp(-c\eps \ln n)}{c\eps} = o(1).
\end{align*}
On the other hand, we show that it is very unlikely that $L_0$ is large, but $L_1$ is small:
\begin{align*}
\sum_{\ell_0\ge n^{1-3\delta/2}} \sum_{\ell_1< \ln n} \pr [L_0=\ell_0, L_1=\ell_1, \cD \mid \cH^*]
& \le \sum_{\ell_0\ge n^{1-3\delta/2}} \sum_{\ell_1< \ln n} \pr [ L_1=\ell_1 \mid L_0=\ell_0, \cH^*]\\
& \le \sum_{\ell_0\ge n^{1-3\delta/2}} \pr\left[\Bi(\ell_0,n^{-1+2\delta})\le \ln n\right]\\
& \le n \cdot \pr\left[\Bi(n^{1-3\delta/2},n^{-1+2\delta})\le \ln n\right].
\end{align*}
Using Lemma~\ref{lem:chernoff}, we obtain 
\begin{align*}
\pr\left[\Bi(n^{1-3\delta/2},n^{-1+2\delta})\le \ln n\right] &\leq \pr\left[\Bi(n^{1-3\delta/2},n^{-1+2\delta})\le \left( 1-\delta\right) n^{\delta /2} \right] \leq \exp \left[ - \frac{n^{\delta /2} \delta^2}{2} \right]
\end{align*}
and, consequently, 
\begin{align*}
\sum_{\ell_0\ge n^{1-3\delta/2}} \sum_{\ell_1\le \ln n} \pr[L_0=\ell_0, L_1=\ell_1, \cD \mid \cH^*] = o(1).
\end{align*} 
Finally, observe that if both $L_0$ and $L_1$ are small, but $\cD$ holds, then we cannot have terminated
the algorithm because we have not deleted enough vertices: if $\cD$ holds, $\ell_0< n^{1-3\delta/2}$ and $\ell_1< \ln n,$ then
\begin{align*}
n^{1-\delta} \le L_0k_0+L_1k_1 & \le n^{1-3\delta/2}2\ln n + \ln n \frac{1}{\omega p_1}\\
& = o(n^{1-\delta}) + o(\ln n \sqrt{n\ln n}) = o(n^{1-\delta}),
\end{align*}
which is clearly a contradiction. Thus we have $\Pr[\cD \mid \cH^*] = o(1)$.

Moreover, from Lemma~\ref{lem:setsizes} we obtain that conditional on $\cH^*$
\begin{align*}
\left|R_{T(\ell)}{(\ell)}\right| \geq \frac{1}{2} T(\ell) np_1^{(1)}.
\end{align*}
Finally, due to Lemma~\ref{lem:setsizes} this yields
\begin{align*}
\Pr \left[ \bigcup_{\ell \leq L} \cE_{k_1-1}(\ell) \cap  \left\{ \left|R_{T(\ell)}\right| \geq \frac{1}{2}n p_1^{(1)} T(\ell)\right\}  \right]
& \ge \Pr [\overline\cD \cap \cH^*]
\ge \Pr [\overline\cD \mid \cH^*]- \Pr [\bar{\cH^*}] = 1-o(1)
\end{align*}
as required.
\qed

\subsection{Final stages}\label{sec:sprinkling}

In the previous section we found a step $\ell$ such that $|X_{T(\ell)}|\ge k_1$ and $|R_{T(\ell)}|\ge T(\ell) n p_1^{(1)}/2$. 
Once again we drop the $\ell$ from our notation.

We now show that $X_T$ grows into a larger percolating set by examining the red neighbourhood of $X_T$. 
Note that until this point no edge between $x_{k_1}$ and $R_T$ has been revealed. In addition
any blue edge we have revealed so far is incident to a vertex of
$D_1:=(V\setminus V_L')$.

\begin{lem}\label{lem:furext}
	With high probability $G(n,p_1^{(1)},p_2^{(1)})$ contains a percolating set of size $n/(4\omega)$.
\end{lem}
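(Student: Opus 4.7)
The plan is to extend the percolating set $X_T$ produced by Lemma~\ref{lem:constrU1} (which has size at least $k_1$) into a percolating set of size at least $n/(4\omega)$, by continuing the construction algorithm past its stopping condition $s_{t-1}\ge k_1$. First, I would observe that, as noted in the paragraph preceding the lemma, the algorithm has so far only queried edges incident to the starting vertices $\{x_1,\ldots,x_T\}$. Since $s_T\ge k_1>T$, the set $X_T$ contains ``unused'' vertices $x_{T+1},\ldots,x_{s_T}$ whose red neighbourhoods, as well as their blue edges to $R_T$, remain fresh and independent of the random choices already exposed. These unqueried edges supply the randomness required for further growth.

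Next, I would continue running the construction algorithm, using $x_{T+1},x_{T+2},\ldots$ as the next starting vertices, for some total number of steps $T'$ to be chosen. The per-step distributional bound of Lemma~\ref{lem:Xtincrements} applies uniformly over $t$, so, conditional on a version of the event $\cH$ extended over the full range of steps, the increment $|X_t|-|X_{t-1}|$ stochastically dominates $\Po_{\le\rho}((1+\eps/5)(2t-1)/(4\ln n))$. Since the mean of this variable exceeds $1$ for $t\ge k_0$ (and we have already passed $k_0$, because $k_1\gg k_0$), the process comfortably survives, and the cumulative size after $t$ steps concentrates around $\sum_{s=1}^t(2s-1)/(4\ln n)\approx t^2/(4\ln n)$.

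Choosing $T':=\sqrt{n\ln n/\omega}$ yields an expected cumulative size of $n/(4\omega)$, as required. The loop condition $t\le s_{t-1}$ is preserved throughout, because $s_{T'}\approx n/(4\omega)\ge T'$ whenever $16\omega\ln n\le n$, which holds for $\omega=\ln\ln n$ and $n$ large. A Chernoff-type concentration argument analogous to Lemma~\ref{lem:setsizes}, together with a union bound over the $T'$ extended steps, then shows $|X_{T'}|\ge n/(4\omega)$ with high probability.

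The main technical obstacle is adapting the good event $\cH$ to this extended range. When $t$ reaches order $1/p_2$, the expected size of $B_t=N_2(x_1,\ldots,x_t)\cap Q_t$ becomes comparable to $|Q_t|$, so the upper bound $|B_t|\le(\eps^*/4)np_1^{(1)}$ built into the original $\cH$ fails. Since only a lower bound on $|X_{T'}|$ is needed, however, a one-sided version of $\cH$ (keeping only the lower tails of $|Q_t|$ and $|B_t|+|C_t|$) suffices, and once $|B_t|$ grows large the algorithm absorbs vertices even faster, which only helps the argument.
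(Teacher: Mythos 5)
Your proposal takes a genuinely different route from the paper. Rather than continuing Algorithm~\ref{algo:construct}, the paper uses a more static, two-step argument: having produced $X_T$ with $|X_T|\ge k_1$ and $|R_T| \ge T n p_1^{(1)}/2$, it exposes the fresh red neighbourhood $R'$ of the remaining starting vertices $x_{T+1},\ldots,x_{k_1}$, concentrates its size by Chernoff, and sets $R := (R_T\cup R')\setminus D_1$, which has size at least $n/(3\omega)$. Every vertex of $R$ has a red edge into $X_T$, and no blue edge within $\{x_{k_1}\}\cup R$ has yet been queried, so $G_2[\{x_{k_1}\}\cup R]\sim G(|R|+1,p_2)$ has expected average degree $|R|p_2=\omega(1)$; hence whp it has a giant blue component of size $(1-o(1))|R|\ge n/(4\omega)$ containing $x_{k_1}$, and that entire component merges with $X_T$ via blue paths to $x_{k_1}$ and the red edges into $X_T$. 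This avoids any re-analysis of the construction dynamics at larger scales.

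Your continuation argument has a real gap, hidden in the choice of $T'$ and the claim that Lemma~\ref{lem:Xtincrements} applies uniformly. The failure mode you flag for $\cB_t$, namely $t$ reaching order $1/p_2$, is a red herring: $1/p_2\ge\sqrt{4n\ln n}>T'$ always. The actual obstruction is that the reservoir $R_t$ saturates once $t$ reaches order $1/p_1^{(1)}$, which can be as small as $\Theta((\ln n)^2)$ when $p_1=\Theta((\ln n)^{-2})$; at that point $V_\ell\setminus(X_{t-1}\cup R_{t-1})$ is exhausted, so $Q_t$ and hence $B_t$ collapse to nearly empty, the event $\cR_t$ (which would force $|R_t|=(1\pm\error)tnp_1^{(1)}>n$) cannot hold, and the Poisson domination underlying your quadratic growth $|X_t|\approx t^2/(4\ln n)$ no longer follows. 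In that regime growth becomes linear, at rate roughly $|R_t|p_2\approx np_2$ per step, so one needs $T'\gtrsim 1/(\omega p_2)$, which vastly exceeds $\sqrt{n\ln n/\omega}$ when $p_2$ is near its minimum $\ln n/n$. Conversely, when $p_1\approx p_2$, no saturation occurs and $T'=\sqrt{n\ln n/\omega}$ would be correct. So no single choice of $T'$ works across the admissible range of $(p_1,p_2)$, and a correct version of your argument would require a two-phase (quadratic-then-linear) analysis with a parameter-dependent stopping time, together with a replacement for $\cH$ in the saturated phase — a substantially heavier undertaking than the paper's proof.
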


\begin{proof}
	By Lemma~\ref{lem:constrU1}, whp we have $|X_T|\ge k_1$ and $|R_T|\ge T np_1^{(1)}/2$.
	Until this point we have only exposed the (partial) red neighbourhood of $\{x_1,\ldots,x_T\}$.
	Now we expose the red neighbourhood in $V_L'$ of $\{x_{T+1},\ldots,x_{k_1}\}$,
	and denote this red neighbourhood by $R'$. Clearly
	$$|R'|\sim \Bi\left(|V_L'|,1-\left(1-p_1^{(1)}\right)^{k_1-T}\right)$$
	and
	since $\mathbb{E}[|R'|] \ge (1-o(1))n(k_1-T)p_1^{(1)}\to \infty$,
	Lemma~\ref{lem:chernoff} implies that whp $|R'|\ge (k_1-T)np_1^{(1)}/2$.
	Therefore for $R=(R_T\cup R')\setminus D_1$ we have 
	$$|R|\ge k_1np_1^{(1)}/2-n^{1-\delta}-k_1 \ge n/(3\omega).$$
	
	Now $X_T$ forms a percolating set and every vertex in $R$ has a red neighbour in $X_T$,
	and therefore the (blue) component of $G_2[\{x_{k_1}\}\cup R]$ containing $x_{k_1}$
	can be added to the percolating set.
	Recall that no blue edges have been exposed in $\{x_{k_1}\}\cup R$, and therefore
	$G_2[\{x_{k_1}\}\cup R]\sim G(|R|+1,p_2)$.
	This graph has expected average degree $|R|p_2 \ge np_2/(3\omega)=\omega(1)$
	and therefore whp has a giant component covering all but $o(|R|)$ vertices, and
	in particular containing $x_{k_1}$, and the result follows.
\end{proof}

We can now complete the proof of the supercritical case

\begin{proof}[Proof of Theorem~\ref{thm:main}~\eqref{thm:main:supercrit}]
	Recall that since we assume that $p_2 \ge \frac{\ln n}{n}$,
	the probability that $G_2\sim G(n,p_2)$ is connected is at least a positive constant.
	Therefore any event that occurs with high probability also occurs with high probability in the probability space
	conditioned on $G_2$ being connected.
	\footnote{Note that this is the only point in the argument at which we need to
	condition on $G_2$ being connected.
	We also no longer need to assume that $G_1$ is connected since we assumed (wlog) that
	$p_1\ge p_2$, and it follows from~\eqref{pre:p1lower} that $G_1$ is connected whp.}

	In particular, let $U_2$ be the percolating set provided with high probability by Lemma~\ref{lem:furext}.
	For all $v\notin U_2,$ we have
	\begin{align*}
	\pr [ v \notin N^{(2)}_1(U_2) ] = \left( 1-\frac{\eps p_1}{2} \right)^{\frac{n}{4\omega}}\leq \exp \left( -  \frac{\eps n p_1 }{8\omega }\right) \le \exp\left(-n^{1/3}\right) = o(n^{-2}),
	\end{align*}
	where we have used~\eqref{pre:p1lower} and the fact that $\omega = \ln \ln n$ is subpolynomial.
	Hence a union bound over all at most $n$ vertices of $V\setminus U_2$ shows that whp all are in $ N^{(2)}_1(U_2)$.
	On the other hand, since the blue graph is connected by assumption,  it is easy to see that the jigsaw process will percolate.
\end{proof}

\section{Concluding Remarks} \label{sec:concluding}

\subsection{The critical window}
We have proved that Theorem~\ref{thm:main} for $\eps >0$ an arbitrarily small constant.
However, we note that for connectedness, of which jigsaw percolation may be considered
the double-graph analogue, a much stronger result is true. Namely the classical result of
\Erdos\ and \Renyi~\cite{ErdosRenyi59} implies that if
$p= \frac{\ln n + c_n}{n}$, then whp $G(n,p)$ is not connected if $c_n \to -\infty$
and whp $G(n,p)$ is connected if $c_n \to \infty$. In other words,
setting $p=(1- \eps)\frac{\ln n}{n}$ in the subcritical case, or $p=(1+\eps)\frac{\ln n}{n}$,
whp we have $G(n,p)$ being disconnected or connected respectively
provided that $\eps \gg (\ln{n})^{-1}$.

Similarly, it would be interesting to know for which $\eps = o(1)$ the statement of Theorem~\ref{thm:main}
is still true. With a little more care, our proof would show that
$\eps \gg (\ln{n})^{-1/4}$ is sufficient, but it seems likely that this is not best possible.

The key step required to understanding the critical window seems to be the number of minimal percolating sets on $k$ vertices. We provide upper and lower bounds on the asymptotics of this value, which differ by a factor of $e^{o(k)}$. More precise estimates on this value translate into sharper bounds on the threshold.

\subsection{Generalisations}
It would also be interesting to determine the exact threshold for the various
generalisations of Theorem~\ref{thm:BRSS}, including the analogous results
for multiple graphs~\cite{CG17} and for hypergraphs~\cite{BCKK17}. The latter would
be a particular challenge since the proof of the supercritical case in~\cite{BCKK17}
simply involved a reduction to the graph case, i.e. Theorem~\ref{thm:BRSS}.
Since Theorem~\ref{thm:main} is a strengthening of Theorem~\ref{thm:BRSS},
it also makes the hypergraph result stronger; however, the reduction step is not optimal,
and it seems likely that significant new ideas would be required.

\subsection{Other random graph models}
Real world graphs, in particular social networks, tend to have a power law degree distribution.
The binomial random graph does not have this property; however several other random graph models do,
for example the preferential attachment model (introduced in \cite{BA99} and rigorously defined in \cite{BRST01})
and random graphs on the hyperbolic plane (introduced in \cite{KPKVB10}).
The threshold for jigsaw percolation when the people graph is modelled by such a random graph
and for any random or deterministic choice of the puzzle graph
is still unknown. Indeed, apart from a brief one-directional result in~\cite{BrummittChatterjeeDeySivakoff15},
jigsaw percolation involving random graphs with a power-law degree distribution have not been studied.

\subsection{Speed of percolation}
One might also ask how many steps it takes for the jigsaw process to percolate
in the supercritical case, i.e.\ how often we have to construct an auxiliary graph
and merge the components in Algorithm~\ref{alg:jigsaw}. With a little care,
the arguments in this paper could be adapted to show that,
for $p_1p_2=\frac{1+\eps}{4n\ln n}$ where $\eps >0$ is constant,
whp at most $O(\ln n)$ steps are required, and indeed this can even be improved to
$(1+o(1))2\ln n$.
It would be interesting to know whether this upper bound is in fact tight whp.

\section{Acknowledgement}

The authors would like to thank Kathrin Skubch for helpful discussions in the early stages of this project.

\bibliographystyle{plain}

\bibliography{../../References}

\end{document}